\numberwithin{equation}{section}
\newtheorem{theorem}{Theorem}[section]
\newtheorem{lemma}{Lemma}[section]
\newtheorem{proposition}{Proposition}[section]
\newcommand{\dv}{\text{div}}
\title{Global strong solution of the 3D compressible liquid
crystal flows with density-dependent viscosity and large
velocity}
\author{Jiaxu L{\small I}$^{b}$, Yu M{\small EI}$^c$, Rong Z{\small HANG}$^{a,d}$ \thanks{Email addresses:  Jiaxvlee@gmail.com (J. X. Li), yu.mei@nwpu.edu.cn (Y. Mei), rzhang0921@gmail.com (R. Zhang). }  \\ 
{\normalsize a. School of Mathematics and Computer Sciences,}\\
{\normalsize Nanchang University, Nanchang 330031, P. R. China;}\\
{\normalsize b.  School of Mathematical Sciences,}\\
{\normalsize  Shenzhen University, Shenzhen 518060, P. R. China;}\\
{\normalsize c. School of Mathematics and Statistics,}\\
{\normalsize Northwestern Polytechnical University, Xi'an Shaanxi, 710129, P. R. China}\\
{\normalsize d. Institute of Mathematics and Interdisciplinary Sciences,}\\
{\normalsize Nanchang University, Nanchang 330031, P. R. China.}
}
\date{}
\begin{document}
\maketitle
\begin{abstract}
This paper concerns the Cauchy problem of three-dimensional compressible liquid crystal flows with density-dependent viscosity. When the viscosity coefficients $\mu_1(\rho),\mu_2(\rho)$ are power functions of the density with the power larger than $1$, it is proved that the system exists a unique global strong solution as long as the initial density is sufficiently large and $L^3$-norm of the derivative of the initial director is sufficiently small.  This is the first result concerning the global strong solution for three-dimensional compressible liquid crystal flows without smallness of velocity.

\textbf{Keywords:} compressible liquid crystal flow, density-dependent viscosity, global strong solutions, large velocity
\end{abstract}

\section{Introduction}

In this paper, we concern the following simplified Ericksen-Leslie system for compressible liquid crystal flows in three-dimensional whole space $\mathbb{R}^3$ 
\begin{equation}\label{ins}
	\begin{cases}
	\rho_t+\mathrm{div}(\rho u)=0,\\
		(\rho u)_t+\mathrm{div}(\rho u\otimes u)+\nabla P-\mathrm{div} \mathbb{T}=-\nu\mathrm{div}(\nabla d\odot\nabla d-\frac{1}{2} |\nabla d|^2 \mathbb{I}_3), \\
        d_t+u\cdot\nabla d=\lambda(\Delta d+|\nabla d|^2d),\\
        |d|=1,
	\end{cases} 
\end{equation}
The unknowns $\rho, u$ and $d$ represent, respectively, the density, the velocity and the direction field. The pressure $P$ is given by
\begin{equation}\label{eqn-of-P}
P=a\rho^\gamma,
\end{equation}
where $a>0$ is an entropy constant and $\gamma>1$ is the adiabatic exponent. The viscous stress tensor $\mathbb{T}$ has the following form
\begin{equation}\label{eqn-of-T}
\mathbb{T}=2 \mu_1(\rho) \mathcal{D}u +\mu_2(\rho)\mathrm{div}u\mathbb{I}_3,
\end{equation}
where $\mathcal{D}u =\frac{1}{2}\left(\nabla u+(\nabla u)^\top \right)$ is the deformation tensor and $\mathbb{I}_3$ is the $3\times 3$ identity matrix, and the viscosities $\mu_1(\rho)$ and $\mu_2(\rho)$ are assumed to satisfy 
\begin{equation}\label{vis-d}
\mu_1(\rho)=\mu_1\rho^\alpha,\ \mu_2(\rho)=\mu_2\rho^\alpha,
\end{equation}
with constants $\alpha\geq 0$ and
\begin{equation}\label{pc}
\mu_1>0,\ 2\mu_1+3\mu_2\geq 0.
\end{equation}
The system \eqref{ins} is supplemented with the following initial conditions
\begin{equation}\label{ini-data}
\rho(0,x)=\rho_0(x),~~u(0,x)=u_0(x),~~d(0,x)=d_0(x), ~~~~x\in\mathbb{R}^3,
\end{equation}
and  the far-field behaviour
\begin{equation}\label{bc}
    (\rho, u, d)(t,x) \to (\bar \rho ,0, e), \quad  \text{as}\  |x|\to \infty, t\ge0, 
\end{equation}
with $e$ being a given unit vector.

The system \eqref{ins} is the simplified hydrodynamic model of liquid crystal flows introduced by Ericksen\cite{ericksen1961conservation} and Leslie\cite{leslie1968some} in 1960s. It is a coupling system between compressible Navier-Stokes system and the harmonic map heat flow from $\mathbb{R}^3$ to $\mathbb{S}^2$. In fact, \eqref{ins} reduce to the compressible isentropic Navier-Stokes system if $d$ is a constant vector, while the harmonic heat flow if $\rho$ a constant and $u$ zero. For the compressible isentropic Navier-Stokes system, there is a lot of literature on the well-posedness of solutions to the Cauchy problem in multi-dimensional space. When viscosity coefficients are positive constants $(\alpha=0)$, Lions proved, in the pioneer works \cite{Lions1993,Lions1998-2}, the global existence of weak solutions  for large initial data provided $\gamma\geq\frac{9}{5}$, which is improved later by Feireisl, Novotny and Petzeltov\'a \cite{Feireisl2001} for $\gamma>\frac{3}{2}$ and Jiang and Zhang\cite{JiangZhang} for the spherically symmetric weak solution with $\gamma>1$. Concerning global classical solutions, Matsumura and Nishida \cite{Matsumura1980} first obtained the well-posedness of classical solutions for initial data close to a non-vacuum equilibrium in the Sobolev space $H^s$. Later, Huang, Li and Xin \cite{HuangLiXin2012} obtained the global existence of classical solutions to the Cauchy problem in three dimension with vacuum and large oscillations provided that the initial energy is suitably small. Li and Xin \cite{LiXin2019} further obtained some large time decay rates of the pressure and the spatial gradient of the velocity field provided that the initial total energy is suitably small, and shown the global well-posedness and large time asymptotic behavior of
strong and classical solutions to the Cauchy problem of compressible Navier–Stokes equations
in two or three spatial dimensions with
far field vacuum. When viscosity coefficients depend on the density, Li and Xin \cite{15li-xin}, Vasseur and Yu \cite{Vasseur2016} have independently proved the global existence of weak solutions for compressible Navier-Stokes systems with the two viscosity coefficients being power functions of the density and satisfying the B-D entropy condition \cite{bresch2004some}. The extended result for non-linear density-dependent viscosities satisfying the B-D entropy condition is obtained in \cite{bresch2021global}. For global strong or classical solutions, Kazhikhov and Vaigant \cite{kazhikhov1995existence} introduced density-dependent viscosities with constant $\mu_1$ and $\mu_2(\rho)=\rho^\beta$ and established the global existence of strong solutions to the two-dimensional problem with large initial data away from the vacuum provided $\beta>3$. Recently, this result is generalized in series of work \cite{JiuWangXin2013,JiuWangXin2018,huang2022global,fan2022global}from $\beta>3$ to $\beta>\frac{4}{3}$ for the Cauchy and initial boundary value problems.  For density-dependent viscosities as in \eqref{vis-d}, Xin and Zhu \cite{XIN2021108072} obtained the global existence of regular solutions with vacuum at infinity if $\alpha>1$ and $\alpha\geq2\gamma-1$ for a class of smooth initial data that are of small density but possibly large velocities. Very recently, Huang, Li and Zhang\cite{HuangLiZhang2024} proved the global well-posedness of strong solutions if $\alpha,\gamma>1$ and $\alpha>\max\{\frac{\gamma+1}{2},\gamma-1\}$ and the initial density sufficiently large, which extend the range of $\alpha,\gamma$ required in \cite{yu2023global}. We refer to \cite{Cho2004unique,LiLiang,17jmfm-li-pan-zhu,19arma-li-pan-zhu,zhuxin2021}  for the related local well-posedness results as well as \cite{CaoLiZhu,WenZhang} and reference therein for the one-dimensional problem. 

For the harmonic map heat flows, Eells and Sampson, in their seminal work \cite{EellsSampson}, showed that there exists a globally smooth solution for smooth data if the sectional
curvature of target manifold is non-positive. Without the curvature assumption, Struwe\cite{Struwe1985} proved the existence of a global weak solutions with non-increasing Dirichlet
energy to the harmonic map heat flow in Riemannian surface, which is smooth away from finitely
many possible singular points. Chen and Struwe\cite{ChenStruwe} showed the existence of a global, partially regular weak solution
of the harmonic map heat flow in higher dimensions with the help of the monotonicity formula discovered by Struwe\cite{Struwe1988} and the Ginzburg-Landau type approximation scheme introduced.  The uniqueness was proved by Freire\cite{Freire1995} in the Struwe-type class obtained in \cite{Struwe1985}. However,  Coron\cite{Coron} constructed an example to shown that the harmonic map heat flow in higher dimensions may fail to
be unique among Chen-Struwe solutions in\cite{ChenStruwe}.  It was also shown that the Struwe-type solution may indeed blow up in finite time. The first example was found by Chang, Ding and Ye\cite{Chang} in the radially symmetric class from the disk to the sphere. Very recently, D\'{a}vila, del Pino and Wei\cite{Davila} constructed the example of a solution, which blows up precisely at any given finite
points in a two-dimensional bounded, smooth domain with suitable initial and boundary values. 

For the simplified compressible liquid crystal system \eqref{ins} with constant viscosity coefficients, global existence of weak solutions with finite energy were obtained in \cite{JiangJiangWang2013,JiangJiangWang2014,LinLaiWang} under the geometric condition that the initial orientational director field $d_0$ lies in the hemisphere. The geometric condition as well as the maximum principle for the director are key ingredients for global compactness from the Ginzburg-Landau penalty term $\frac{1}{\varepsilon^2}(1-|d|^2)d$ to the supercritial nonlinear term $|\nabla d|^2d$, which is firstly established for incompessible liquid crystal flows in \cite{lin2016global}. However, it still remains open to remove this geometric condition and prove the global weak solutions with finite energy even for the simplified incompressible liquid crystal system in $\mathbb{R}^3$. 
As to global strong and classical solutions, there are some well-posedness results under some smallness assumption on the initial data. For initial data being close to the constant equilibrium
state $(1,0,e)$, Hu and Wu proved the global existence and uniqueness of strong solutions in critical Besov spaces,  while Gao, Tao and Yao\cite{gaotaoyao} obtained the global existence and long-time behavior of classical solution in $H^s(\mathbb{R}^3)$($s\geq 3$).  Li, Xu and Zhang\cite{LiXuZhang} established the global existence of classical solutions with large oscillations and vacuum to three-dimensional compressible liquid crystal flows. The results for two-dimensional Cauchy problem and initial-boundary one were obtained by Wang \cite{Wangt2016}, Sun and Zhong\cite{SunZhong} respectively. For the density-dependent viscosity case, according to our acknowledge, the only result was obtained in \cite{ZhongZhou2024}, where they proved the global well-posedness of strong solutions to the two-dimensional Cauchy problem for the Kazhikhov-Vaigant type viscosity with $\beta>\frac{4}{3}$.

In this paper, we will investigate the global existence and uniqueness of strong solutions to the Cauchy problem to the system \eqref{ins} with density-depend viscosity coefficients of the form in \eqref{vis-d}. We would like to establish the global existence of strong solutions to the system \eqref{ins}-\eqref{vis-d} provided that the density is large enough and as well as the $L^3$norm of the derivative of director is small enough under some mild assumptions on $\alpha$. This is motivated by the corresponding results for compressible Navier-Stokes system obtained by Huang, and the first and last authors in \cite{HuangLiZhang2024} and inhomogenous liquid crystal flows in \cite{LiMeiZhang}. This result is based on the observation that the Reynolds number $Re=\rho uL/\mu_i(\rho)$ will be small provided $\alpha > 1$ and the density $\rho$ is large enough, no matter how fast the flow speed is, so that it does not required any smallness assumption of velocity. However, being subject to our approach, it seems impossible to further remove the smallness of director due to supercritical nonlinearity term $|\nabla d|^2d$ and nonlinear constrain $|d|=1$ in the director equation. As a result, we impose the smallness of director in critical space in the sense that the system \eqref{ins} is invariant under the scaling transform
$$\rho_\tau(x,t)=\rho(\tau x,\tau^2t),\quad u_\tau(x,t)=u(\tau x,\tau^2t),\quad d_\tau(x,t)=d(\tau x,\tau^2t).$$

Before stating the main results, we explain the notation and conventions used throughout this paper. 
Denote
\begin{equation*}
	\int f \mathrm{dx}=\int_{\mathbb{R}^3} f\mathrm{dx}.
\end{equation*}
Define
    \begin{equation}\label{g}
        G(\rho)=\rho \int_{\bar \rho }^{\rho} \frac{P(s)-P(\bar\rho)}{s^2}ds.
    \end{equation}
For $1\le r\le \infty$ and integer $k\ge 0$, we denote the standard homogeneous and inhomogeneous Sobolev spaces as follows:
\begin{equation*}
\begin{gathered}
L^r=L^r(\mathbb{R}^3), \quad D^{k,r}=\{u\in L^1_{loc}(\mathbb{R}^3)\ |\  \|\nabla^k u\|_{L^r}<\infty\},\\
W^{k,r}=L^r \cap D^{k,r}, \quad H^k=W^{k,2}.
\end{gathered}
\end{equation*}

Our main result is stated as follows.
\begin{theorem}\label{global} 
    Let $\bar \rho>1$ be a given constant and $\alpha,\gamma$ satisfy that
\begin{equation}\label{vis-c}
        \alpha>1, \gamma>1, \alpha>\frac{\gamma+1}{2}, \alpha\ge \gamma-1.
    \end{equation}
    Suppose that the initial data $(\rho_0,u_0,d_0)$ satisfies 
\begin{equation}\label{ia}
		\frac{3}{4} \bar \rho \le \rho_0\le \frac{5}{4} \bar \rho,\quad \rho_0-\bar \rho \in D^{1,2} \cap 
        D^{1,q},\ 3<q<6, G(\rho_0)\in L^1, \quad  u_0 \in H^2,\quad d_0-e\in H^3.
    \end{equation}
    Then there exists positive constants $\Lambda_0$ depending only
	on $a, \mu_1,\mu_2, \nu, \lambda, \alpha,$ $\|G(\rho_0)\|_{L^1}, \|\nabla\rho_0\|_{L^2},$ $ \|\nabla\rho_0\|_{L^q}, $ $\| u_0\|_{H^2},$ and $ \|\nabla d_0\|_{H^2},$ and $ \varepsilon_0$ depending only on Sobolev constants such that if
	\begin{equation}\label{ini}
		\bar\rho\ge \Lambda_0, \quad \|\nabla d_0\|_{L^3} \le \varepsilon_0,
	\end{equation}
    then the compressible simplified Ericksen-Leslie system  (\ref{ins})-(\ref{bc}) admits a unique global strong solution $(\rho,u, d)$ in $\mathbb{R}^3\times(0,\infty)$ satisfying \begin{equation}
		\frac{2}{3} \bar \rho \le \rho \le \frac{4}{3} \bar \rho,
    \end{equation}
    and 
    \begin{equation}
    \left\{ \begin{array}{l}
    \rho \in C([0,\infty);D^{1,2}\cap D^{1,q}),\
    \nabla u\in C([0,\infty);H^1)\cap L^2((0,\infty);W^{1,q}),\\
    \rho_t \in C([0,\infty);L^q),\
    \sqrt{\rho}u_t\in L^\infty((0,\infty);L^2),\ 
    u_t\in L^2((0,\infty);H^1_0),\\
    d-e\in C([0,T;H^3])\cap L^2(0,T;H^4), d_t\in C([0,T;H^1])\cap L^2(0,T;H^2).
    \end{array} \right.
\end{equation}
\end{theorem}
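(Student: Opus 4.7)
The overall plan is to combine local-in-time existence of strong solutions with time-uniform \emph{a priori} estimates via a continuation argument. Local existence of a strong solution of the regularity listed in Theorem \ref{global} on some interval $[0,T_*)$ can be constructed in the spirit of Cho--Kim type arguments used in the works cited above for compressible Navier--Stokes with density-dependent viscosity, since the director equation with constraint $|d|=1$ is semilinear and couples in through a standard fixed-point step. I would then define
\begin{equation*}
T^\star=\sup\Bigl\{T\in[0,T_*):\ \tfrac{2}{3}\bar\rho\le\rho(t,x)\le\tfrac{4}{3}\bar\rho,\ \|\nabla d(t)\|_{L^3}\le 2\varepsilon_0\ \text{on}\ [0,T]\Bigr\},
\end{equation*}
and show that if $\bar\rho\ge\Lambda_0$ and $\varepsilon_0$ are chosen correctly, both bootstrap bounds self-improve to $\frac{3}{4}\bar\rho\le\rho\le\frac{5}{4}\bar\rho$ and $\|\nabla d\|_{L^3}\le\varepsilon_0$ on $[0,T^\star]$, forcing $T^\star=\infty$.

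For the \emph{a priori} estimates I would proceed as follows. First, testing the momentum equation against $u$ and the director equation against $-\nu(\Delta d+|\nabla d|^2d)$, the Ericksen stress cancels in the standard way, and the constraint $|d|=1$ (which forces $d\cdot d_t=0$) kills the otherwise supercritical contribution of $|\nabla d|^2d$; this yields $L^\infty_t L^1_x$ control of $\rho|u|^2+G(\rho)+|\nabla d|^2$ together with $L^2_t L^2_x$ dissipation of $\rho^{\alpha/2}\nabla u$ and $\Delta d+|\nabla d|^2d$. Second, on $[0,T^\star]$ the viscosity $\mu_i(\rho)\sim\bar\rho^\alpha$ is very large, so that the effective Reynolds number is controlled by a negative power of $\bar\rho$; rewriting the momentum equation through the effective viscous flux $F=(2\mu_1+\mu_2)\rho^\alpha\dv u-(P(\rho)-P(\bar\rho))$ and using the hypotheses $\alpha>\frac{\gamma+1}{2}$ and $\alpha\ge\gamma-1$ to dominate pressure work by viscous dissipation, exactly as in \cite{HuangLiZhang2024}, one obtains uniform estimates on $\|\nabla u\|_{L^2}$ and $\|\sqrt{\rho}u_t\|_{L^2}$ with smallness proportional to $\bar\rho^{-(\alpha-1)}$.

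Third, for the director I would test the equation successively against $-\Delta d$ and $\Delta^2 d$. The transport term $u\cdot\nabla d$ is absorbed via the velocity estimates already obtained, while the supercritical term produces $\int|\nabla d|^2\,d\cdot\Delta d\,dx$, which by H\"older and Gagliardo--Nirenberg is bounded by $\|\nabla d\|_{L^3}\|\nabla d\|_{L^6}\|\Delta d\|_{L^2}\lesssim\|\nabla d\|_{L^3}\|\Delta d\|_{L^2}^2$ and can be absorbed into the left-hand side under the bootstrap $\|\nabla d\|_{L^3}\le 2\varepsilon_0$. To propagate the scaling-invariant smallness of $\|\nabla d\|_{L^3}$ itself, I would test $|\nabla d|\nabla d$ against the gradient of the director equation to derive
\begin{equation*}
\frac{d}{dt}\|\nabla d\|_{L^3}^3+c\int|\nabla d|\,|\nabla^2 d|^2\,dx\lesssim \|\nabla u\|_{L^\infty}\|\nabla d\|_{L^3}^3+\|\nabla d\|_{L^3}^5,
\end{equation*}
and close it by Gronwall, using the smallness of $\int_0^t\|\nabla u\|_{L^\infty}ds$ coming from the large-$\bar\rho$ regime together with the bootstrap bound. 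Higher-order bounds on $\rho-\bar\rho$ in $D^{1,q}$ and on $u$ in $H^2$ then follow from standard commutator and Beale--Kato--Majda-type arguments, using the pointwise density bound and the $L^1_tL^\infty_x$ control of $\nabla u$ just obtained.

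The main obstacle I expect is the simultaneous closure of the density $L^\infty$ bound on $[0,T^\star]$, which needs $\int_0^{T^\star}\|\dv u\|_{L^\infty}dt$ to be small, together with the propagation of $\|\nabla d\|_{L^3}$ smallness, since the Ericksen coupling $\dv(\nabla d\odot\nabla d-\tfrac12|\nabla d|^2\mathbb{I}_3)$ forces the velocity estimates to depend on the director $H^2$ norm, while that $H^2$ norm depends in turn on the velocity. I would resolve this by a staged bootstrap: first freeze the director at $\|\nabla d\|_{L^3}\le 2\varepsilon_0$ and close all density and velocity estimates with smallness $\bar\rho^{-(\alpha-1)}$; then feed these back to improve the director $L^3$ smallness below $\varepsilon_0$; finally upgrade to the full regularity stated in the theorem. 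Uniqueness is obtained by subtracting two such solutions and running a Gronwall argument in the natural energy norm, using the $L^\infty$ bounds on $\rho$ and $\nabla d$ already established.
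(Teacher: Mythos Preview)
Your overall strategy---local existence plus a bootstrap continuation, basic energy with the Ericksen-stress cancellation, effective viscous flux for the velocity, and an $L^3$ differential inequality for $\nabla d$---matches the paper's. Two technical points deserve comment, since they affect how the closure actually runs.

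First, for the $L^3$ propagation of $\nabla d$ you write the transport term as $\|\nabla u\|_{L^\infty}\|\nabla d\|_{L^3}^3$ and plan to use smallness of $\int_0^t\|\nabla u\|_{L^\infty}\,ds$. The paper is sharper here: it uses the H\"older split
\[
\int|\nabla u|\,|\nabla d|^3\,dx\le C\|\nabla u\|_{L^2}\,\|\nabla d\|_{L^3}^{3/4}\,\|\nabla d\|_{L^9}^{9/4},
\]
then interpolates $\|\nabla d\|_{L^9}$ against the dissipation $\int|\nabla d|\,|\nabla^2 d|^2\,dx\sim\|\nabla|\nabla d|^{3/2}\|_{L^2}^2$, arriving at a Gronwall factor $\exp\{C\int_0^T\|\nabla u\|_{L^2}^4\,dt\}$. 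This only needs the first-order velocity energy $\mathcal{E}_{u,1}$ and the basic energy inequality, so the director $L^3$ bound (and then the $H^2$ bound on $d$) closes \emph{before} any $W^{2,q}$ or $L^\infty$ control on $\nabla u$ is available. Your route through $\|\nabla u\|_{L^\infty}$ would force the density-gradient and $\|\sqrt{\rho}u_t\|$ estimates into the loop earlier and makes the ``staged bootstrap'' harder to disentangle.

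Second, your effective viscous flux $F=(2\mu_1+\mu_2)\rho^\alpha\mathrm{div}\,u-(P(\rho)-P(\bar\rho))$ is the constant-viscosity object; for density-dependent viscosity the paper first divides the momentum equation by $\rho^\alpha$ to obtain a constant-coefficient Lam\'e operator, and then sets $F=(2\mu_1+\mu_2)\mathrm{div}\,u+\mathcal{P}(\rho)-\mathcal{P}(\bar\rho)$ with $\nabla\mathcal{P}=a\gamma\rho^{\gamma-\alpha-1}\nabla\rho$. This is what makes the elliptic estimates for $\nabla F$, $\nabla w$ clean. Relatedly, the paper's bootstrap hypothesis (its Proposition~3.1) carries six quantities simultaneously---$\|\rho-\bar\rho\|_{L^\infty}$, $\|\nabla\rho\|_{L^2}$, $\|\nabla\rho\|_{L^q}$, $\|\nabla u\|_{L^2}$, $\|\sqrt{\rho}u_t\|_{L^2}$, and $\|\nabla d\|_{L^3}$---rather than just the two you list, precisely because the $L^6$ and $L^\infty$ velocity-gradient estimates feed on the density gradients and on $\|\sqrt\rho\,u_t\|_{L^2}$; your two-quantity $T^\star$ would need to be enlarged to make the argument go through.
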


\section{Preliminaries}
First, the following local existence theory, where the initial density is strictly away from vacuum, can be shown by similar arguments as in  \cite{Ma2013}:
\begin{lemma}\label{local}
    Assume that the initial data $(\rho_0, u_0, d_0)$ satisfies the regularity condition \eqref{ia}.
    Then there exists a small time $T$ and a unique strong solution $(\rho, u, d)$ to the initial boundary value problem \eqref{ins}-\eqref{bc} such that
\begin{equation}\label{l-r}
    \left\{ \begin{array}{l}
    \rho\in C([0,T];D^{1,2}\cap D^{1,q}),\
    \nabla u\in C([0,T];H^1)\cap L^2([0,T];W^{1,q}),\\
    \rho_t \in C([0,T];L^q),\
    \sqrt{\rho}u_t\in L^\infty([0,T];L^2),\ 
    u_t\in L^2([0,T];H^1_0),\\
    d-e\in C([0,T;H^3])\cap L^2(0,T;H^4), d_t\in C([0,T;H^1])\cap L^2(0,T;H^2).
    \end{array} \right.
\end{equation}
\end{lemma}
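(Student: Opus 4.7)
The plan is to establish local existence by a standard linearization-plus-iteration scheme, taking advantage of the fact that the hypothesis $\frac{3}{4}\bar\rho\le\rho_0\le\frac{5}{4}\bar\rho$ keeps the density uniformly away from vacuum on a short time interval, so that the viscosity coefficients $\mu_i(\rho)=\mu_i\rho^\alpha$ are uniformly bounded above and below and the momentum equation is uniformly parabolic. I would therefore adapt the Cho--Kim / Ma \cite{Ma2013}-type scheme to the present coupled system \eqref{ins}.

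First, I would set up the iteration. Given $(\rho^n,u^n,d^n)$ with $\rho^0\equiv\rho_0$, $u^0\equiv u_0$, $d^0\equiv d_0$, define $(\rho^{n+1},u^{n+1},d^{n+1})$ successively by the linear problems
\begin{equation*}
\rho^{n+1}_t+\dv(\rho^{n+1}u^n)=0,\quad \rho^{n+1}|_{t=0}=\rho_0,
\end{equation*}
\begin{equation*}
\rho^{n+1}u^{n+1}_t+\rho^{n+1}u^n\!\cdot\!\nabla u^{n+1}-\dv\mathbb{T}(\rho^{n+1},u^{n+1})=-\nabla P(\rho^{n+1})-\nu\dv\!\bigl(\nabla d^n\odot\nabla d^n-\tfrac12|\nabla d^n|^2\mathbb{I}_3\bigr),
\end{equation*}
\begin{equation*}
d^{n+1}_t+u^n\!\cdot\!\nabla d^{n+1}=\lambda\bigl(\Delta d^{n+1}+|\nabla d^n|^2 d^n\bigr),
\end{equation*}
with initial conditions $u_0,d_0$ and far-field data $(\bar\rho,0,e)$. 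The transport equation for $\rho^{n+1}$ is solved by the characteristics method, yielding the pointwise bound $\frac23\bar\rho\le\rho^{n+1}\le\frac43\bar\rho$ on a short interval; the velocity equation is a linear second-order parabolic system with smooth, strictly positive coefficients; and the director equation is a linear parabolic equation with a known forcing. Standard Galerkin / fixed-point theory provides the existence of $(\rho^{n+1},u^{n+1},d^{n+1})$ with the regularity listed in \eqref{l-r}.

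Second, I would derive uniform-in-$n$ a priori bounds on $[0,T]$ for $T$ small enough. The density gradient estimate $\|\nabla\rho^{n+1}\|_{L^2\cap L^q}$ is obtained by differentiating the transport equation and applying Gr\"onwall, yielding a closed bound in terms of $\|\nabla u^n\|_{L^\infty}$ which is controlled by $\|u^n\|_{H^2}$ and Sobolev embedding (using $q>3$). For the velocity, standard energy estimates give $\|u^{n+1}\|_{H^2}$ after multiplying by $u^{n+1}_t$ and $\Delta u^{n+1}$ and differentiating in time once (the time-differentiated equation supplies $\|\sqrt{\rho^{n+1}}u^{n+1}_t\|_{L^\infty_tL^2}$ and $\|u^{n+1}_t\|_{L^2_tH^1}$); the coupling from the director is absorbed using $\|\nabla d^n\|_{H^2}$. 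For the director, multiplying the linear equation successively by $\Delta d^{n+1}$, $\Delta^2 d^{n+1}$, and differentiating once in time gives $\|d^{n+1}-e\|_{H^3}$, $\|d^{n+1}\|_{L^2_tH^4}$, and the corresponding bound on $d^{n+1}_t$. By choosing $T$ small depending only on the initial norms, all these estimates close in a bounded set $\mathcal{N}(T)$.

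Third, I would prove that $(\rho^n,u^n,d^n)$ is Cauchy in the low-regularity norm $L^\infty_tL^2\times L^\infty_tL^2\cap L^2_tH^1\times L^\infty_tH^1\cap L^2_tH^2$ by subtracting consecutive iterates and using the uniform bounds; shrinking $T$ further produces a contraction, giving a limit $(\rho,u,d)$ that solves \eqref{ins} in the strong sense with the regularity \eqref{l-r}. To preserve the unit-length constraint, I would test $|d|^2-1$ against the equation resulting from dotting the $d$-equation with $d$; this yields a linear scalar parabolic inequality for $|d|^2-1$ with zero initial data, which vanishes identically. Uniqueness follows from the same difference estimate applied to two strong solutions with the same data. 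The main technical obstacle I expect is propagating the $D^{1,q}$ regularity of $\nabla\rho$ simultaneously with the $H^2$ regularity of $u$: one needs a sharp $L^1_tW^{1,\infty}$-type control on $\nabla u$, which is obtained by combining $\sqrt{\rho}u_t\in L^\infty_tL^2$ with elliptic regularity applied to the Lam\'e-type operator $-\dv\mathbb{T}$ (keeping careful track of the $\rho^\alpha$-dependence of the coefficients) and interpolating via Sobolev embedding; this is precisely the step where the coupling with the director forcing $\dv(\nabla d\odot\nabla d)$ must be controlled in $L^q$, using $d-e\in H^3$.
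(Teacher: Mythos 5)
The paper does not prove this lemma itself but simply invokes the linearization--iteration scheme of \cite{Ma2013} (in the Cho--Kim style), and your proposal reconstructs exactly that standard argument — successive linear transport/parabolic problems, uniform short-time bounds away from vacuum, contraction in a low-regularity norm, and the maximum-principle argument showing $|d|^2-1$ solves a linear parabolic equation with zero data so the constraint propagates. Your sketch is correct and is essentially the same approach the paper relies on.
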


Next, the following well-known Gagliardo-Nirenberg inequality  will be used frequently later (see \cite{ladyzhenskaia1968linear}).
\begin{lemma}[Gagliardo-Nirenberg Inequality]\label{G-N}
Let 1 $\leq q \le  +\infty$ be a positive extended real quantity. Let $j$ and $m$  be non-negative integers such that $ j < m$. Furthermore, let $1 \leq r \leq \infty$ be a positive extended real quantity,  $p \geq 1$  be real and  $\theta \in [0,1]$  such that the relations
\begin{equation}
    \dfrac{1}{p} = \dfrac{j}{n} + \theta \left( \dfrac{1}{r} - \dfrac{m}{n} \right) + \dfrac{1-\theta}{q}, \qquad \dfrac jm \leq \theta \leq 1,
\end{equation}
hold. Then, 
\begin{equation}
    \|\nabla^j u\|_{L^p(\mathbb{R}^n)} \leq C\|\nabla^m u\|_{L^r(\mathbb{R}^n)}^\theta\|u\|_{L^q(\mathbb{R}^n)}^{1-\theta},
\end{equation}
where $u \in L^q(\mathbb{R}^n)$  such that  $\nabla^m u \in L^r(\mathbb{R}^n)$. Moreover, if $q>1$ and $r>3$,
\begin{equation}
\|u\|_{C(\overline{\mathbb{R}^n})}\leq C\|u\|^{q(r-3)/(3r+q(r-3))}_{L^q(\mathbb{R}^n)}\|\nabla u\|^{3r/(3r+q(r-3))}_{L^r(\mathbb{R}^n)},
\end{equation}
where $u \in L^q(\mathbb{R}^n)$  such that  $\nabla u \in L^r(\mathbb{R}^n)$.
The constant $C > 0$  depends on the parameters $j,\,m,\,n,\,q,\,r,\,$ and $\theta$.

\end{lemma}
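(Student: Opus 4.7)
The plan is to follow the classical proof of the Gagliardo-Nirenberg inequality in three stages: a base Sobolev-type embedding obtained by slicing, a dilation consistency check that forces the exponent relation, and an interpolation argument for intermediate $\theta$. Throughout, by density of $C_c^\infty(\mathbb{R}^n)$ in each space under consideration, it suffices to prove both displayed inequalities for smooth compactly supported $u$ and then pass to the limit by Fatou.

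First I would establish the base inequality $\|v\|_{L^{n/(n-1)}} \leq C\|\nabla v\|_{L^1}$ via the Loomis-Whitney slicing trick: for each coordinate $i$, the fundamental theorem of calculus yields $|v(x)| \leq \int_{\mathbb{R}} |\partial_i v|\,ds_i$; multiplying the $n$ such bounds, taking the $(n-1)$th root, and integrating with the generalized Hölder inequality produces the claim. Applying this to $|u|^s$ with $s=p(n-1)/(n-p)$ and using Hölder on $\nabla(|u|^s) = s|u|^{s-1}\mathrm{sgn}(u)\nabla u$ upgrades it to the Sobolev inequality $\|u\|_{L^{np/(n-p)}} \leq C\|\nabla u\|_{L^p}$ for $1\le p<n$. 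Iterating provides the higher-order Sobolev embeddings that correspond to the endpoint $\theta=j/m$.

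Next I would verify that the exponent relation in the statement is forced by homogeneity: substituting $u_\lambda(x):=u(\lambda x)$ and comparing the resulting $\lambda$-powers on the two sides produces exactly $1/p = j/n + \theta(1/r - m/n) + (1-\theta)/q$, so no inequality of the proposed form can hold without it. To establish the inequality itself for general $\theta\in[j/m,1]$, I would interpolate between the two endpoints. At $\theta=1$ the bound is the iterated Sobolev embedding applied at the correct derivative level; at $\theta=j/m$ it is another endpoint Sobolev embedding. For intermediate $\theta$, I would interpolate using Hölder, or equivalently via a Littlewood-Paley decomposition $u=\sum_N \Delta_N u$ combined with Bernstein's inequality $\|\nabla^k \Delta_N u\|_{L^p}\lesssim 2^{kN}\|\Delta_N u\|_{L^p}$, which reduces the problem to a weighted dyadic sum controlled by the geometric interpolation of the two endpoints.

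The second displayed inequality is the Morrey-type case $p=\infty$, $j=0$, $m=1$, $n=3$ with $r>3$. Here the fundamental theorem of calculus along straight lines, together with Hölder for $r>n=3$, yields a pointwise modulus of continuity for $u$ controlled by $\|\nabla u\|_{L^r}$; combining this with $L^q$ integrability gives the uniform bound, and the displayed exponents $3r/(3r+q(r-3))$ and $q(r-3)/(3r+q(r-3))$ are pinned down by the same dilation check as before. The main technical obstacle in writing out a first-principles argument is the clean execution of the interpolation between endpoint Sobolev embeddings, which is why the paper simply cites the classical reference \cite{ladyzhenskaia1968linear} rather than reproducing the proof.
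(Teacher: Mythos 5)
The paper offers no proof of this lemma at all: it is stated as a classical fact and attributed to \cite{ladyzhenskaia1968linear}, so there is no internal argument to compare yours against. Your outline is the standard textbook route (Loomis--Whitney slicing for $W^{1,1}\hookrightarrow L^{n/(n-1)}$, bootstrapping via $|u|^{s}$ to the full Sobolev embedding, a dilation check to pin the exponent relation, interpolation for intermediate $\theta$, and a Morrey-type argument for the sup-norm case), and the exponent computations you indicate all check out; in particular the relation $\theta = 3r/(3r+q(r-3))$ in the second display is exactly what scaling forces for $n=3$. Two caveats are worth recording. First, the step you describe as "interpolate using H\"older" does not by itself bridge different derivative levels -- plain H\"older interpolates $L^p$ norms of a \emph{fixed} function, whereas here one must trade $\nabla^{j}$ against $\nabla^{m}$ and $u$; the honest elementary route is Nirenberg's inductive integration-by-parts scheme (the model case being $\|\nabla v\|_{L^2}^2 \le \|v\|_{L^2}\|\nabla^2 v\|_{L^2}$), and the Littlewood--Paley/Bernstein alternative you mention is a legitimate substitute, so you should commit to one of these rather than leave it as an "or equivalently." Second, the lemma as stated in the paper silently omits the classical exceptional case ($1<r<\infty$ and $m-j-n/r$ a nonnegative integer, where $\theta=1$ fails), which neither the paper nor your sketch addresses; this does not affect any application in the paper but should be flagged if one were actually to reproduce the proof.
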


\section{A priori estimates}

For any fixed time $T>0$, $(\rho,u, d)$ is the unique local strong solution to \eqref{ins}-\eqref{bc} on $\mathbb{R}^3\times (0,T]$ with initial data $(\rho_0,u_0, d_0)$ satisfying \eqref{ia}, which is guaranteed by Lemma 
\ref{local}.

Define
\begin{gather}\label{As1}
    \mathcal{E}_d(T)\triangleq\sup_{t\in[0,T] }\|\nabla 
    d \|_{L^3},\\
    \mathcal{E}_{\rho,1}(T) \triangleq \sup_{t\in[0,T] }\|\rho-\bar\rho \|_{L^\infty},\\
	\mathcal{E}_{\rho,2}(T) \triangleq \sup_{t\in[0,T] }\|\nabla \rho \|_{L^q}^2
	+ \bar \rho^{\gamma-\alpha} \int_0^{T}  \|\nabla \rho \|_{L^q}^2dt,\\
	\mathcal{E}_{\rho,3}(T) \triangleq \sup_{t\in[0,T] }\|\nabla \rho \|_{L^2}^2
	+ \bar \rho^{\gamma-\alpha} \int_0^{T}  \|\nabla \rho \|_{L^2}^2dt,\\
	\mathcal{E}_{u,1}(T) \triangleq \mu \frac{\bar\rho^{\alpha} }{2^{\alpha+1}} \sup_{t\in[0,T] }\|\nabla u \|_{L^2}^2
	+ \frac 12  \int_0^{T}  \|\sqrt{\rho} u_t \|_{L^2}^2dt,\\
	\mathcal{E}_{u,2}(T) \triangleq \sup_{t\in[0,T] }\|\sqrt{\rho} u_t \|_{L^2}^2
	+ \mu \frac{\bar\rho^{\alpha} }{2^{\alpha+1}}  \int_0^{T}  \|\nabla u_t \|_{L^2}^2dt.
\end{gather}

We have the following key proposition.

\begin{proposition}\label{pr}
Under the conditions of Theorem \ref{global},for 
    \begin{equation}\label{b}
        \beta=\max\{3-\gamma,0\}
    \end{equation}
there exist positive constants  $\Lambda_0$ depending on $a,\mu_1, \mu_2,\nu, \lambda,  \alpha,\gamma$,  $\|G(\rho_0)\|_{L^1}, \|\nabla\rho_0\|_{L^q}, \| u_0\|_{H^2},$ $ \|\nabla d_0\|_{H^2}$, 
and $\varepsilon_0$ depending only on the Sobolev constants, 
and $N_i=N_i(a,\mu_1,\mu_2,\nu, \lambda,  \alpha,\gamma,\|\rho_0-\bar\rho\|_{L^{\gamma}}, \|\nabla \rho_0\|_{L^q}, \|\nabla u_0\|_{H^1}, \|\nabla d_0\|_{H^2}), i=1,2,4$ $N_3=N_3(\mu_1, \mu_2, \|\nabla u_0\|_{L^2})$,  such that if $(\rho,u,d)$ is a smooth solution to the problem (\ref{ins})--(\ref{bc}) on $\mathbb{R}^3\times (0,T]$ satisfying
    \begin{equation}\label{a1}
    \begin{aligned}
        \mathcal{E}_d(T)\leq 2\delta,
        ~\mathcal{E}_{\rho,1}(T) \le \frac{\bar \rho}{2}, 
        ~\mathcal{E}_{\rho,2}(T) \le 2N_1 \bar\rho^{\beta},
        ~\mathcal{E}_{\rho,3}(T) \le 2N_2 \bar\rho^{\beta},\\
	~\mathcal{E}_{u,1}(T) \le 2^{\alpha+2}N_3 \bar\rho^{\alpha},
        ~\mathcal{E}_{u,2}(T) \le 3 N_4 \bar\rho^{2\alpha-1}, \qquad\qquad\quad 
    \end{aligned}
    \end{equation}
for some $\delta<1$ sufficiently small (defined in \eqref{delta}), then the following estimates hold:
    \begin{equation}
        \begin{aligned}
        \mathcal{E}_d(T)\leq \delta,
        ~\mathcal{E}_{\rho,1}(T) \le \frac{\bar \rho}{4}, 
        ~\mathcal{E}_{\rho,2}(T) \le N_1 \bar\rho^{\beta},
        ~\mathcal{E}_{\rho,3}(T) \le N_2 \bar\rho^{\beta},\\
	~\mathcal{E}_{u,1}(T) \le 2^{\alpha+1}N_3 \bar\rho^{\alpha},
        ~\mathcal{E}_{u,2}(T) \le 2 N_4 \bar\rho^{2\alpha-1},  \qquad\qquad\quad
    \end{aligned}
    \end{equation}
	provided 
    \begin{equation}
		\bar\rho\ge \Lambda_0,\quad \|\nabla d_0\|_{L^3}\leq  \varepsilon_0.
    \end{equation}     
\end{proposition}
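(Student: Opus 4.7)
The plan is to argue by a bootstrap: assuming the weaker bounds in \eqref{a1}, I derive the sharper target bounds, provided $\bar\rho$ is large enough and $\|\nabla d_0\|_{L^3}$ is small enough. The two structural observations driving the proof are (i) with $\mu_i(\rho)=\mu_i\rho^\alpha$ and $\alpha>1$, the effective Reynolds number $\rho|u|L/\mu_i(\rho)\sim\bar\rho^{1-\alpha}$ is small once $\bar\rho$ is large, so viscous dissipation absorbs convective terms no matter the size of $u$; and (ii) $\|\nabla d\|_{L^3}$ is critical for the scaling of \eqref{ins}, and its smallness is exactly what is needed to absorb the supercritical term $|\nabla d|^2 d$.

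First, I would derive the basic energy identity by testing the momentum equation with $u$ and the director equation with $-(\Delta d+|\nabla d|^2 d)$; the Ericksen--Leslie coupling $-\nu\dv(\nabla d\odot\nabla d-\tfrac12|\nabla d|^2\mathbb{I}_3)$ cancels against the transport term from the $d$-equation, yielding control of $\|\sqrt\rho u\|_{L^2}^2+\|G(\rho)\|_{L^1}+\|\nabla d\|_{L^2}^2$ together with the dissipations $\int\mu(\rho)|\mathcal D u|^2$ and $\int|\Delta d+|\nabla d|^2 d|^2$. Next, for the critical director estimate $\mathcal E_d$, I would test the $d$-equation with $|\nabla d|\nabla d$ after taking one spatial derivative, using $|d|=1$ to rewrite $d\cdot\Delta d=-|\nabla d|^2$; Gagliardo--Nirenberg in $\R^3$ combined with the smallness hypothesis $\mathcal E_d(T)\le2\delta$ absorbs $|\nabla d|^2 d$ on the right and produces a differential inequality
\begin{equation*}
\tfrac{d}{dt}\|\nabla d\|_{L^3}^3+c\,\|\nabla d\|_{L^3}\|\nabla^2 d\|_{L^2}^2\le C\|\nabla u\|_{L^2}\|\nabla d\|_{L^3}^2\|\nabla^2 d\|_{L^2}+C\|\nabla d\|_{L^3}^5,
\end{equation*}
that closes to $\mathcal E_d(T)\le\delta$ once $\delta,\varepsilon_0$ are small; propagation of higher $H^2$ and $H^3$ regularity of $d$ follows by standard energy estimates on the differentiated equation, using $\mathcal E_{u,1}$ to control the transport term.

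For the velocity estimates, testing the momentum equation with $u_t$ yields
\begin{equation*}
\tfrac{d}{dt}\!\int\mu(\rho)|\mathcal D u|^2\,dx+\int\rho|u_t|^2\,dx\le \text{convective}+\text{pressure}+\text{director coupling}.
\end{equation*}
The convective term $\int\rho u\cdot\nabla u\cdot u_t$ is bounded by $\|\rho\|_{L^\infty}^{1/2}\|u\|_{L^6}\|\nabla u\|_{L^3}\|\sqrt\rho u_t\|_{L^2}$; since the viscous dissipation carries $\bar\rho^\alpha$ while the right carries at worst $\bar\rho^{1/2}$, the gap $\bar\rho^{1-\alpha}$ from $\alpha>1$ permits absorption. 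The pressure term uses $\alpha\ge\gamma-1$ and $\alpha>(\gamma+1)/2$, and the director coupling $\nu\int\dv(\nabla d\odot\nabla d)\cdot u_t$ is absorbed by smallness of $\|\nabla d\|_{L^3}$ together with $H^2$-regularity of $d$. This closes $\mathcal E_{u,1}$. Differentiating the momentum equation in $t$ and testing with $u_t$ controls $\mathcal E_{u,2}$, the bad term $\int\rho_t|u_t|^2$ being handled via the continuity equation and the $L^q$-bound on $\nabla\rho$. For the density estimates, I would introduce the effective viscous flux $F=(2\mu_1(\rho)+\mu_2(\rho))\dv u-(P-P(\bar\rho))+\nu(\text{director term})$, derive elliptic $L^2\cap L^q$-bounds for $F$, and run a Beale--Kato--Majda-type argument on the transport equation for $\rho-\bar\rho$ to close $\mathcal E_{\rho,1},\mathcal E_{\rho,2},\mathcal E_{\rho,3}$, with $\bar\rho^{\gamma-\alpha}\le 1$ supplying the critical decay in the weighted time integrals.

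The hardest part will be the simultaneous closure of all six bootstrap quantities while tracking the exact powers of $\bar\rho$. Every dangerous term must be absorbed by exactly one of three mechanisms---viscous dissipation (via $\bar\rho^{1-\alpha}$ from $\alpha>1$), pressure dissipation (via $\alpha>(\gamma+1)/2$ and $\alpha\ge\gamma-1$, which is where $\beta=\max\{3-\gamma,0\}$ enters the $\bar\rho^\beta$ bounds on $\mathcal E_{\rho,2},\mathcal E_{\rho,3}$), or the director smallness $\varepsilon_0$---without double-counting. The coupled terms where director regularity meets density-weighted velocity estimates (in particular $\nu\int\nabla d\odot\nabla d:\nabla u_t$ and the $H^3$-propagation of $d$ driven by $\|\nabla u\|_{H^1}$) are the points where the bookkeeping is most delicate, and are what ultimately pin down the precise constraints $\alpha>(\gamma+1)/2$ and $\alpha\ge\gamma-1$.
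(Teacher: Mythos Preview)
Your overall bootstrap architecture matches the paper's: basic energy, then $\mathcal E_d$ via testing with $|\nabla d|\nabla d$, then $\mathcal E_{u,1}$ by testing with $u_t$, then $\mathcal E_{u,2}$ by differentiating in $t$, then the density quantities via an effective viscous flux. The absorption mechanisms you list (viscous gain $\bar\rho^{1-\alpha}$, director smallness, the exponent constraints on $\alpha,\gamma$) are also the right ones.

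There is, however, a genuine gap in your treatment of the density estimates. First, your effective viscous flux $F=(2\mu_1(\rho)+\mu_2(\rho))\dv u-(P-P(\bar\rho))+\cdots$ is the constant-viscosity object and does \emph{not} satisfy a clean elliptic equation here: with $\mu_i(\rho)=\mu_i\rho^\alpha$ one has $\dv(\mu(\rho)\nabla u)\neq\mu(\rho)\Delta u$, so $\Delta F$ picks up uncontrolled $\nabla\rho^\alpha$ terms. The paper first divides the momentum equation by $\rho^\alpha$, obtaining the constant-coefficient operator $\mu_1\Delta u+(\mu_1+\mu_2)\nabla\dv u$ with a \emph{modified} pressure $\mathcal P\sim\rho^{\gamma-\alpha}$ (or $\ln\rho$ if $\gamma=\alpha$); the correct flux is then $F=(2\mu_1+\mu_2)\dv u+\mathcal P-\mathcal P(\bar\rho)$, and the commutator terms $\rho^{-1}\nabla\rho\cdot\mathcal D u$ go into the right-hand side $H$.

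Second, the mechanism closing $\mathcal E_{\rho,2},\mathcal E_{\rho,3}$ is not a Beale--Kato--Majda/Gr\"onwall argument, and your claim ``$\bar\rho^{\gamma-\alpha}\le 1$ supplies the decay'' is both unnecessary and in general false (the hypotheses allow $\gamma>\alpha$). What actually happens is that, after the division by $\rho^\alpha$, differentiating the continuity equation and substituting $\dv u=(F-\mathcal P+\mathcal P(\bar\rho))/(2\mu_1+\mu_2)$ produces a genuine \emph{damping} term
\[
\frac{a\gamma}{2\mu_1+\mu_2}\int\rho^{\gamma-\alpha}|\nabla\rho|^q\,dx\ \sim\ \bar\rho^{\gamma-\alpha}\|\nabla\rho\|_{L^q}^q
\]
on the left-hand side. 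It is this pressure-induced damping, not smallness of $\bar\rho^{\gamma-\alpha}$, that yields the weighted time integral in $\mathcal E_{\rho,2},\mathcal E_{\rho,3}$ and ultimately the $\bar\rho^\beta$ bound with $\beta=\max\{3-\gamma,0\}$. Without this structure your density step does not close.
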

First, the basic energy inequality of the system \eqref{ins} reads
\begin{lemma}
    Under the assumption $ \mathcal{E}_{\rho,1}(T) \le \frac{\bar \rho}{2}$, it holds that
    \begin{align}\label{basic-est-0}
		&\sup_{0\le t\le T} \left(\bar \rho \| u\|_{L^2}^2 + \bar \rho^{\gamma-2}\|\rho- \bar \rho \|_{L^2}^2 + \|\nabla d\|_{L^2}^2\right)   +  \int_{0}^{T} \bar\rho^{\alpha}\|\nabla u\|_{L^2}^2+\|\Delta d+|\nabla d|^2d\|_{L^2}^2 dt \nonumber\\
        &\le C(\bar\rho\|u_0\|_{L^2}^2+ \|G(\rho_0)\|_{L^1}^\gamma +\|\nabla d_0\|_{L^2}^2) \le C \bar \rho,
	\end{align}
 where $C$ depends on $\|u_0\|_{L^2},\|G(\rho_0)\|_{L^1}, \|\nabla d_0\|_{L^2}$.
 
    Furthermore, under the additional assumptions $\mathcal{E}_{u,1}(T) \le 2^{\alpha+2}N_3 \bar\rho^{\alpha}, \mathcal{E}_d(T)\le 2\delta$ with $\delta$ sufficiently small, we have
    \begin{equation}\label{basic-est}
		\sup_{0\le t\le T} \|\nabla d\|_{L^2}^2  +  \int_{0}^{T} \|\nabla^2 d\|_{L^2}^2 dt \le C\|\nabla d_0\|_{L^2}^2,
    \end{equation}
    provided $\bar \rho>1$.
\end{lemma}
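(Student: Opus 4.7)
The plan is to derive both bounds from energy identities obtained by testing the equations against carefully chosen multipliers and exploiting a cancellation between the coupling terms in the momentum and director equations.

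For \eqref{basic-est-0}, I would test the momentum equation against $u$ (using the continuity equation to combine $\int (\rho u)_t\cdot u\,dx$ and $\int\mathrm{div}(\rho u\otimes u)\cdot u\,dx$ into $\frac{d}{dt}\!\int\tfrac12\rho|u|^2\,dx$), and rewrite the pressure contribution using $G$ from \eqref{g}, which satisfies $\frac{d}{dt}\!\int G(\rho)\,dx+\int(P(\rho)-P(\bar\rho))\,\mathrm{div}\,u\,dx=0$. In parallel I test the director equation against $-\nu(\Delta d+|\nabla d|^2 d)$. The pointwise identity $\mathrm{div}\bigl(\nabla d\odot\nabla d-\tfrac12|\nabla d|^2\mathbb{I}_3\bigr)=\nabla d\cdot\Delta d$ shows that the coupling term in the momentum equation is $-\nu\!\int(u\cdot\nabla d)\cdot\Delta d\,dx$, which cancels exactly with the convective contribution arising from the director test. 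The terms $\int d_t\cdot|\nabla d|^2 d\,dx$ and $\int(u\cdot\nabla d)\cdot|\nabla d|^2 d\,dx$ vanish because $|d|=1$ forces $d\cdot d_t=0$ and $d\cdot\partial_i d=0$. Summing produces
\[
\frac{d}{dt}\!\int\!\Bigl(\tfrac12\rho|u|^2+G(\rho)+\tfrac\nu2|\nabla d|^2\Bigr)dx+\!\int\!\bigl(2\mu_1(\rho)|\mathcal D u|^2+\mu_2(\rho)(\mathrm{div}\,u)^2\bigr)dx+\nu\lambda\!\int\!|\Delta d+|\nabla d|^2 d|^2\,dx=0.
\]
Using $\mathcal{E}_{\rho,1}(T)\le\bar\rho/2$, so $\rho\sim\bar\rho$ and $\mu_i(\rho)\sim\bar\rho^\alpha$, a Taylor expansion giving $G(\rho)\sim\bar\rho^{\gamma-2}|\rho-\bar\rho|^2$, and Korn's inequality, time integration delivers \eqref{basic-est-0}; the final $\le C\bar\rho$ bound comes from $\bar\rho>1$.

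For \eqref{basic-est}, I would instead test the director equation against $-\Delta d$ and exploit $d\cdot\Delta d=-|\nabla d|^2$ (differentiating $|d|^2=1$ twice) to obtain
\[
\tfrac12\frac{d}{dt}\|\nabla d\|_{L^2}^2+\lambda\|\Delta d\|_{L^2}^2=\int(u\cdot\nabla d)\cdot\Delta d\,dx+\lambda\|\nabla d\|_{L^4}^4.
\]
By the Gagliardo--Nirenberg inequality $\|\nabla d\|_{L^4}^2\le C\|\nabla d\|_{L^3}\|\Delta d\|_{L^2}$ and $\mathcal{E}_d(T)\le 2\delta$, the last term is bounded by $C\delta^2\|\Delta d\|_{L^2}^2$ and is absorbed for $\delta$ small. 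The convective term is bounded by $C\|\nabla u\|_{L^2}\|\nabla d\|_{L^3}\|\Delta d\|_{L^2}$; inserting $\|\nabla d\|_{L^3}^2\le C\|\nabla d\|_{L^2}\|\Delta d\|_{L^2}$ and Young's inequality produces $\tfrac{\lambda}{4}\|\Delta d\|_{L^2}^2+C\|\nabla u\|_{L^2}^4\|\nabla d\|_{L^2}^2$, with the first piece again absorbed. Grönwall's inequality then yields
\[
\|\nabla d\|_{L^2}^2(T)\le\|\nabla d_0\|_{L^2}^2\exp\!\Bigl(C\!\int_0^T\!\|\nabla u\|_{L^2}^4\,dt\Bigr),
\]
and I bound $\int_0^T\!\|\nabla u\|_{L^2}^4\,dt\le\sup_t\|\nabla u\|_{L^2}^2\cdot\int_0^T\!\|\nabla u\|_{L^2}^2\,dt$ using $\mathcal{E}_{u,1}(T)\le 2^{\alpha+2}N_3\bar\rho^{\alpha}$, which gives $\sup_t\|\nabla u\|_{L^2}^2\le C$ independent of $\bar\rho$, together with \eqref{basic-est-0}, which gives $\int_0^T\!\|\nabla u\|_{L^2}^2\,dt\le C\bar\rho^{1-\alpha}$. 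Since $\alpha>1$ and $\bar\rho>1$ the product is $O(1)$, so the exponential factor is bounded by a constant, producing \eqref{basic-est}. The companion bound on $\int_0^T\|\nabla^2 d\|_{L^2}^2\,dt$ follows by integrating the absorbed differential inequality and noting $\|\nabla^2 d\|_{L^2}=\|\Delta d\|_{L^2}$ on $\mathbb{R}^3$.

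The main obstacle is to keep the two independent smallness mechanisms cleanly separated: the smallness of $\|\nabla d\|_{L^3}$ (through $\delta$) is what handles the geometric constraint term $\lambda\|\nabla d\|_{L^4}^4$ coming from $|d|=1$, whereas the largeness of $\bar\rho$ combined with $\alpha>1$ is what produces the decay factor $\bar\rho^{1-\alpha}$ needed to control the velocity--director coupling. This is precisely what allows the estimate to close without imposing any smallness on $u$ itself. Ensuring that the constants emerging from the Grönwall step remain independent of $\bar\rho$ and $\delta$ is the only delicate bookkeeping required.
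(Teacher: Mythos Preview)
Your proposal is correct and follows essentially the same route as the paper. The only cosmetic difference is that for \eqref{basic-est} the paper reuses the identity from testing against $-(\Delta d+|\nabla d|^2 d)$ and then expands $\|\Delta d+|\nabla d|^2 d\|_{L^2}^2=\|\Delta d\|_{L^2}^2-\|\nabla d\|_{L^4}^4$, whereas you test directly against $-\Delta d$; the two are equivalent via $d\cdot\Delta d=-|\nabla d|^2$, and the subsequent Gagliardo--Nirenberg/Young/Gr\"onwall steps match line by line.
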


\begin{proof}
First, it follows from \eqref{a1} that 
    \begin{equation}\label{upbd}
        \frac{\bar \rho }{2} \le \rho \le \frac{3\bar \rho}{2}.
    \end{equation}
Multiplying \eqref{ins}$_2$ by $u$ and integrating the resultant equation, we obtain after integration by parts that 
 \begin{equation}\label{d1}
    \begin{aligned}
        &\frac{d}{dt} \left( \frac{1}{2} \|\sqrt{\rho} u\|_{L^2}^2 + \|G(\rho)\|_{L^1} \right) + \mu_1 \frac{\bar \rho^{\alpha}}{2^\alpha}\|\nabla u\|_{L^2}^2\\
        &\leq  \frac{d}{dt} \left( \frac{1}{2} \|\sqrt{\rho} u\|_{L^2}^2 + \|G(\rho)\|_{L^1} \right)+2\mu_1 \int\rho^\alpha|\mathcal{D}(u)|^2 dx + \mu_2 \int \rho^\alpha (\mathrm{div} u)^2 dx \\
        &=-\nu \int u\cdot\nabla d\cdot\Delta d\, dx,
    \end{aligned}
\end{equation}
where $G(\rho)$ is defined in \eqref{g}. 
 Multiplying \eqref{ins}$_3$ by $- (\Delta d+|\nabla d|^2d)$ and integrating the resultant equation, we obtain from integration by parts and using $|d|=1$ and \eqref{bc}  that 
	\begin{equation}\label{d-1st}
		\begin{aligned}
			&\frac{1}{2} \frac{d}{dt} \|\nabla d\|_{L^2}^2 
   +\lambda \int|\Delta d+|\nabla d|^2d|^2 dx
   =\int u\cdot\nabla d\cdot\Delta d\,dx.
		\end{aligned}
	\end{equation}
	Adding \eqref{d-1st} multiplied by $\nu$ to \eqref{d1}, integrating over $[0, T]$ and using \eqref{upbd}
    and 
    \begin{equation*}
        G(\rho) \sim \bar \rho^{\gamma-2} (\rho-\bar\rho)^2,
    \end{equation*}
    due to \eqref{upbd}, lead to \eqref{basic-est-0}.
 
 Furthermore, since $|d|=1$ implies $\Delta d\cdot d=-|\nabla d|^2$, one has
 \begin{equation*}
    \int|\Delta d+|\nabla d|^2d|^2\,dx=\int (|\Delta d|^2-|\nabla d|^4)\,dx. 
 \end{equation*}
By integration by parts and Sobolev inequality Lemma \ref{G-N}, we have 
\begin{equation}
\begin{aligned}\label{d-ell}
    \|\nabla^2 d\|_{L^2}^2 &=  \|\Delta d\|_{L^2}^2 =  \|\Delta d+|\nabla d|^2d\|_{L^2}^2 +  \|\nabla d\|_{L^4}^4\\
    &\le \|\Delta d+|\nabla d|^2d\|_{L^2}^2 +c_1 \|\nabla d\|_{L^3}^2\|\nabla^2 d\|_{L^2}^2\\
    &\leq  \|\Delta d+|\nabla d|^2d\|_{L^2}^2 + 4\delta^2 c_1 \|\nabla^2 d\|_{L^2}^2\\
    &\leq 2 \|\Delta d+|\nabla d|^2d\|_{L^2}^2,
\end{aligned}
\end{equation}
after choosing 
$$\delta<\frac{1}{2\sqrt{2 c_1}},$$
with $c_1$ depending only on Sobolev constants.
Combining the above result with \eqref{d-1st} gives 
\begin{equation}
    \begin{aligned}
      \frac{d}{dt}\|\nabla d\|_{L^2}^2+\lambda\|\nabla^2 d\|_{L^2}^2
      &\leq C\int|\nabla u||\nabla d|^2 dx
      \leq C\|\nabla u\|_{L^2}\|\nabla d\|_{L^3}\|\nabla d\|_{L^6}\\
      &\leq C\|\nabla u\|_{L^2}\|\nabla d\|_{L^2}^{\frac{1}{2}}\|\nabla^2 d\|_{L^2}^\frac{3}{2}\\
      &\leq \frac{\lambda}{2}\|\nabla^2 d\|_{L^2}^2+C\|\nabla u\|_{L^2}^4\|\nabla d\|_{L^2}^2,
    \end{aligned}
\end{equation}
where we have used 
\begin{equation*}
    \|\nabla d\|_{L^3}\leq C\|\nabla d\|_{L^2}^\frac{1}{2}\|\nabla
    ^2d\|_{L^2}^\frac{1}{2},~~\|\nabla d\|_{L^6}\leq C\|\nabla^2d\|_{L^2}.
\end{equation*}
Applying Gronwall's inequality, we obtain
\begin{equation}\label{359}
    \begin{aligned}
    &\sup_{t\in[0,T]}\|\nabla d\|_{L^2}^2 
    +\lambda \int_{0}^{T} \|\nabla^2 d\|_{L^2}^2 dt \leq \|\nabla d_0\|_{L^2}^2\cdot\exp\left\{C\int_0^T\|\nabla u\|_{L^2}^4 dt\right\}\\
    &\leq  \|\nabla d_0\|_{L^2}^2
    \exp\left\{C\|\nabla u_0\|_{L^2}^2\bar\rho^{1-\alpha}\right\}\\
    &\leq C\|\nabla d_0\|_{L^2}^2,
    \end{aligned}
\end{equation}
if $\bar \rho >1$.
\end{proof}

Next, we can close the a prior assumption $\mathcal{E}_d(T)$.
\begin{lemma}
Under the assumptions that $\mathcal{E}_{u,1}(T) \le 2^{\alpha+2}N_3 \bar\rho^{\alpha}, \mathcal{E}_d(T)\le 2\delta$ with $\delta$ sufficiently small, there exist positive constants $\varepsilon_0$ and $\Lambda_1$ such that 
\begin{equation}
\mathcal{E}_d(T)\le \delta,
\end{equation}
provided $\|\nabla d_0\|_{L^3}\leq  \varepsilon_0\triangleq \frac{\delta}{2}$ and $\bar \rho> \Lambda_1=\Lambda_1(\mu_1,\mu_2, \| \nabla u_0\|_{L^2})$.   
\end{lemma}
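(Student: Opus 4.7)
The strategy is to derive an $L^3$ energy inequality for $\nabla d$, since $\|\nabla d\|_{L^3}$ is precisely the scaling-critical quantity for the director equation and the one we are required to keep small. Starting from $\eqref{ins}_3$, apply $\partial_i$, test with $|\nabla d|\partial_i d$ and sum over $i$. The time derivative produces $\tfrac13\frac{d}{dt}\|\nabla d\|_{L^3}^3$; the transport term contributes $\int\partial_i(u\cdot\nabla d)\cdot\partial_i d\,|\nabla d|\,dx$, which after one integration by parts in the second-order piece reduces to terms of the form $\int|\nabla u||\nabla d|^3\,dx$; and the Laplacian, using the pointwise identity $|\nabla|\nabla d|^{3/2}|^2=\tfrac94|\nabla d||\nabla|\nabla d||^2$, yields two good dissipation terms $\lambda\int|\nabla^2 d|^2|\nabla d|\,dx+\tfrac{4\lambda}{9}\int|\nabla|\nabla d|^{3/2}|^2\,dx$. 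The crucial cancellation is that in the supercritical contribution $\lambda\,\partial_i(|\nabla d|^2 d)\cdot\partial_i d\,|\nabla d|$ the piece proportional to $d$ is killed by $|d|=1\Rightarrow d\cdot\partial_i d=0$, leaving only the tractable $\lambda\int|\nabla d|^5\,dx$.

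I would then estimate the right-hand side by using both dissipations. H\"older gives $\int|\nabla d|^5\,dx\le\|\nabla d\|_{L^3}^2\,\||\nabla d|^{3/2}\|_{L^6}^2\le C\|\nabla d\|_{L^3}^2\,\|\nabla|\nabla d|^{3/2}\|_{L^2}^2$, so under $\mathcal{E}_d(T)\le 2\delta$ this is absorbed into the dissipation once $\delta$ is chosen sufficiently small. For the transport coupling, the interpolation $\|\nabla d\|_{L^6}\le\|\nabla d\|_{L^3}^{1/4}\|\nabla d\|_{L^9}^{3/4}$ combined with the Sobolev embedding $\|\nabla d\|_{L^9}^{3/2}=\||\nabla d|^{3/2}\|_{L^6}\le C\|\nabla|\nabla d|^{3/2}\|_{L^2}$ yields
\[
\int|\nabla u||\nabla d|^3\,dx\le C\|\nabla u\|_{L^2}\|\nabla d\|_{L^3}^{3/4}\|\nabla|\nabla d|^{3/2}\|_{L^2}^{3/2},
\]
and Young's inequality with exponents $(4/3,4)$ bounds this by $\epsilon\|\nabla|\nabla d|^{3/2}\|_{L^2}^2+C_\epsilon\|\nabla u\|_{L^2}^4\|\nabla d\|_{L^3}^3$, with the first piece once again absorbable.

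Putting the pieces together yields the differential inequality $\tfrac{d}{dt}\|\nabla d\|_{L^3}^3\le C\|\nabla u\|_{L^2}^4\|\nabla d\|_{L^3}^3$, from which Gronwall gives $\sup_{0\le t\le T}\|\nabla d\|_{L^3}^3\le\|\nabla d_0\|_{L^3}^3\exp\!\bigl(C\!\int_0^T\|\nabla u\|_{L^2}^4\,dt\bigr)$. To show the exponent is small, write $\int_0^T\|\nabla u\|_{L^2}^4\,dt\le \sup_t\|\nabla u\|_{L^2}^2\cdot\int_0^T\|\nabla u\|_{L^2}^2\,dt$; the supremum is bounded by a $\bar\rho$-independent constant because $\mathcal{E}_{u,1}(T)\le 2^{\alpha+2}N_3\bar\rho^\alpha$ divides out the $\bar\rho^\alpha$ prefactor in $\mathcal{E}_{u,1}$, while the basic energy estimate \eqref{basic-est-0} gives $\int_0^T\|\nabla u\|_{L^2}^2\,dt\le C\bar\rho^{1-\alpha}$, which tends to $0$ as $\bar\rho\to\infty$ since $\alpha>1$. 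Choosing $\Lambda_1$ so large that the exponential factor is at most $8$ and setting $\varepsilon_0=\delta/2$ then produces $\sup\|\nabla d\|_{L^3}^3\le 8(\delta/2)^3=\delta^3$, i.e.\ $\mathcal{E}_d(T)\le\delta$. The main obstacle is arranging the interpolations so that the cubic nonlinearity $|\nabla u||\nabla d|^3$, which a priori only sees $\|\nabla u\|_{L^2}$ through the basic energy, can be closed purely with the $L^3$-level dissipation of $d$ without invoking higher regularity of $u$; this is exactly what the $L^3$--$L^9$ interpolation through $|\nabla d|^{3/2}\in H^1$ achieves, and it is also what forces the scaling-critical smallness assumption on $\|\nabla d_0\|_{L^3}$.
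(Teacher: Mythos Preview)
Your proposal is correct and follows essentially the same approach as the paper: derive the $L^3$ energy identity for $\nabla d$ by testing $\partial_i\eqref{ins}_3$ against $|\nabla d|\partial_i d$, use $|d|=1$ to reduce the supercritical term to $\lambda\int|\nabla d|^5\,dx$, absorb everything via the $|\nabla d|^{3/2}\in H^1$ Sobolev/interpolation estimates under $\mathcal{E}_d(T)\le 2\delta$, and close by Gronwall with $\int_0^T\|\nabla u\|_{L^2}^4\,dt\le C\bar\rho^{1-\alpha}$. The only cosmetic differences are that the paper applies Gronwall to $\|\nabla d\|_{L^3}$ rather than its cube and chooses $\Lambda_1$ so that the exponential factor is at most $2$ instead of $8$.
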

\begin{proof}
    Taking the gradient of $\eqref{ins}_3$, we get that
\begin{equation}\label{dd}
    \nabla d_t- \lambda \nabla\Delta d=-\nabla(u\cdot\nabla d)+ \lambda \nabla(\vert\nabla d\vert^2 d).
\end{equation}
Multiplying \eqref{dd} by $|\nabla d| \nabla d$ and integrating the resultant equation give
\begin{equation}
    \begin{aligned}
      &\frac 13 \frac{d}{dt}\|\nabla d\|_{L^3}^3+\lambda \int |\nabla^2 d|^2 |\nabla d| dx+ \lambda \int |\nabla |\nabla d||^2 |\nabla d| dx\\
      &= \lambda \int |\nabla d|^5 dx -  \int \nabla u^i \partial_i d  : \nabla d |\nabla d| dx + \frac 13\int \dv u |\nabla d|^3 dx\\
      &\leq C \lambda \|\nabla d\|_{L^9}^{3} \|\nabla d\|_{L^3}^{2} + C\|\nabla u\|_{L^2}\|\nabla d\|_{L^3}^{\frac 34}\|\nabla d\|_{L^9}^{\frac94}\\
      & \le C \lambda \|\nabla |\nabla d|^{\frac 32}\|_{L^2}^{2} \|\nabla d\|_{L^3}^{2} + C\|\nabla u\|_{L^2}\|\nabla d\|_{L^3}^{\frac 34}\|\nabla |\nabla d|^{\frac 32}\|_{L^2}^{\frac32}\\
      & \le \frac{2\lambda}{9}\|\nabla |\nabla d|^{\frac 32}\|_{L^2}^{2}+ C \lambda \|\nabla d\|_{L^3}^2 \|\nabla |\nabla d|^{\frac 32}\|_{L^2}^{2} + C \|\nabla d\|_{L^3}^3\|\nabla u\|_{L^2}^4\\
      & \le \frac{2\lambda}{9}\|\nabla |\nabla d|^{\frac 32}\|_{L^2}^{2}+ c_2 \lambda \delta \|\nabla |\nabla d|^{\frac 32}\|_{L^2}^{2} + C\|\nabla d\|_{L^3}^3\|\nabla u\|_{L^2}^4,
    \end{aligned}
\end{equation}
which implies 
\begin{equation}
    \begin{aligned}
        &\frac 13 \frac{d}{dt}\|\nabla d\|_{L^3}^3+\lambda \int |\nabla^2 d|^2 |\nabla d| dx+ \frac{\lambda}{9} \int |\nabla |\nabla d|^{\frac 32}|^2 dx \le  C \|\nabla d\|_{L^3}^3 \|\nabla u\|_{L^2}^4,
    \end{aligned}
\end{equation}
after choosing 
\begin{equation}
    \delta< \frac{1}{9 c_2},
\end{equation}
with $c_2$ depending only on Sobolev constants. 
Then, Gronwall inequality implies that 
\begin{equation}
\begin{aligned}
    \sup_{t\in[0,T]} \|\nabla d\|_{L^3} &\le \|\nabla d_0\|_{L^3}\exp\left\{C\int_0^T\|\nabla u\|_{L^2}^4 dt\right\} \\
    &\le \epsilon_0 \exp\left\{C\|\nabla u_0\|_{L^2}^2\bar\rho^{1-\alpha}\right\} \\
    &\le \epsilon_0 \exp\left\{C_1\bar\rho^{1-\alpha}\right\} \le \delta,
\end{aligned}
\end{equation}
after choosing 
\begin{equation}
    \epsilon_0 \triangleq \frac{\delta}{2},
\end{equation}
and 
\begin{equation}
     \bar \rho > \Lambda_1 \triangleq \max\left\{1,\left(\frac{C_1}{\ln2}\right)^{\frac{1}{\alpha-1}}\right\}.
\end{equation}
\end{proof}

\begin{lemma}\label{3d}
Under the assumption $\mathcal{E}_{u,1}(T) \le 2^{\alpha+2}N_3 \bar\rho^{\alpha}, \mathcal{E}_d(T)\le 2\delta$ with $\delta$ sufficiently small, 
we have
\begin{align}\label{d2d}
  \sup_{0\le t\le T}\|\nabla^2 d\|_{L^2}^2 
    +\int_{0}^{T}\|\nabla^3 d\|_{L^2}^2 dt\leq C\|\nabla^2d_0\|_{L^2}^2.
\end{align}
\end{lemma}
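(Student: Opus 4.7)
The plan is to derive an $H^2$-energy estimate for the director by applying $\Delta$ to \eqref{ins}$_3$ and taking the $L^2$ inner product with $\Delta d$. After integration by parts on $\mathbb{R}^3$ (using $\|\nabla^2 d\|_{L^2}=\|\Delta d\|_{L^2}$), this yields
\begin{equation*}
\tfrac{1}{2}\tfrac{d}{dt}\|\nabla^2 d\|_{L^2}^2+\lambda\|\nabla\Delta d\|_{L^2}^2=-\int\Delta(u\cdot\nabla d)\cdot\Delta d\,dx+\lambda\int\Delta(|\nabla d|^2 d)\cdot\Delta d\,dx,
\end{equation*}
and the strategy is to estimate each right-hand term so that (i) a fraction of $\lambda\|\nabla\Delta d\|_{L^2}^2$ absorbs the most singular pieces and (ii) the remainder assembles into a Gronwall inequality with a time-integrable coefficient.

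For the convection term I would integrate by parts once to obtain $\int\nabla(u\cdot\nabla d)\cdot\nabla\Delta d\,dx$, expand $\nabla(u\cdot\nabla d)=\nabla u\cdot\nabla d+u\cdot\nabla^2 d$, and apply H\"older together with the interpolations $\|u\|_{L^6}\le C\|\nabla u\|_{L^2}$, $\|\nabla^2 d\|_{L^3}\le C\|\nabla^2 d\|_{L^2}^{1/2}\|\nabla^3 d\|_{L^2}^{1/2}$ and $\|\nabla d\|_{L^\infty}\le C\|\nabla d\|_{L^3}^{1/3}\|\nabla^3 d\|_{L^2}^{2/3}$ from Lemma \ref{G-N}. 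Young's inequality then gives a bound of the form $\varepsilon\|\nabla^3 d\|_{L^2}^2+C\|\nabla u\|_{L^2}^4\|\nabla^2 d\|_{L^2}^2+C\delta^2\|\nabla u\|_{L^2}^6$, with the $\varepsilon$-piece absorbed into the dissipation.

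The supercritical nonlinear term $\int\Delta(|\nabla d|^2 d)\cdot\Delta d\,dx$ is the delicate point. After integration by parts it equals $-\int\nabla(|\nabla d|^2 d)\cdot\nabla\Delta d\,dx$, and since $|\nabla(|\nabla d|^2 d)|\le 2|\nabla d||\nabla^2 d|+|\nabla d|^3$, it splits into two pieces. The first is bounded by $\|\nabla d\|_{L^3}\|\nabla^2 d\|_{L^6}\|\nabla\Delta d\|_{L^2}\le C\delta\|\nabla^3 d\|_{L^2}^2$ via Sobolev embedding, directly absorbable. The cubic piece $\int|\nabla d|^3|\nabla\Delta d|\,dx$ is where the critical smallness is essential: by the sharp Gagliardo--Nirenberg interpolation
\begin{equation*}
\|\nabla d\|_{L^6}\le C\|\nabla d\|_{L^3}^{2/3}\|\nabla^3 d\|_{L^2}^{1/3}
\end{equation*}
one has $\|\nabla d\|_{L^6}^3\|\nabla\Delta d\|_{L^2}\le C\|\nabla d\|_{L^3}^2\|\nabla^3 d\|_{L^2}^2\le C\delta^2\|\nabla^3 d\|_{L^2}^2$, which also absorbs into the dissipation for $\delta$ small enough.

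Collecting yields $\frac{d}{dt}\|\nabla^2 d\|_{L^2}^2+\frac{\lambda}{2}\|\nabla^3 d\|_{L^2}^2\le C\|\nabla u\|_{L^2}^4\|\nabla^2 d\|_{L^2}^2+C\delta^2\|\nabla u\|_{L^2}^6$. The assumption $\mathcal{E}_{u,1}(T)\le 2^{\alpha+2}N_3\bar\rho^\alpha$ gives $\sup_t\|\nabla u\|_{L^2}^2\le CN_3$, and the basic energy inequality \eqref{basic-est-0} gives $\int_0^T\|\nabla u\|_{L^2}^2\,dt\le C\bar\rho^{1-\alpha}$; hence $\int_0^T(\|\nabla u\|_{L^2}^4+\|\nabla u\|_{L^2}^6)\,dt\le C\bar\rho^{1-\alpha}$, uniformly bounded (and small) for $\alpha>1$ and $\bar\rho>1$. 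Gronwall's inequality then produces \eqref{d2d}. The principal obstacle is the supercritical cubic $|\nabla d|^2 d$, which cannot be controlled by ordinary Sobolev embedding at the $H^2$ level; the hypothesis $\mathcal{E}_d(T)\le 2\delta$ in the scale-critical $L^3$ norm, combined with the sharp interpolation above, is exactly what closes the argument.
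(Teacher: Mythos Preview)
Your proof is correct and essentially the same as the paper's: after one integration by parts both arguments test the gradient of \eqref{ins}$_3$ against $-\nabla\Delta d$, use the critical smallness $\|\nabla d\|_{L^3}\le 2\delta$ to absorb the supercritical pieces into the dissipation, and close by Gronwall with $\int_0^T\|\nabla u\|_{L^2}^4\,dt\le C\bar\rho^{1-\alpha}$. The only minor difference is that for the $\nabla u\cdot\nabla d$ piece the paper interpolates $\|\nabla d\|_{L^\infty}\le C\|\nabla^2 d\|_{L^2}^{1/2}\|\nabla^3 d\|_{L^2}^{1/2}$ rather than against $\|\nabla d\|_{L^3}$, which avoids your residual additive term $C\delta^2\|\nabla u\|_{L^2}^6$ and delivers the bound exactly in the stated form $C\|\nabla^2 d_0\|_{L^2}^2$; your version would literally give $C(\|\nabla^2 d_0\|_{L^2}^2+\delta^2\bar\rho^{1-\alpha})$, which is harmless for the rest of the paper but not quite the lemma as written.
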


\begin{proof}

Multiplying \eqref{dd} by $-\nabla\Delta d $, and using integration by parts, we have 
\begin{equation}\label{d-2nd}
    \begin{aligned}
    &\frac{1}{2}\frac{d}{dt}\|\nabla^2 d\|_{L^2}^2+\lambda \|\nabla\Delta d\|_{L^2}^2=\int\nabla(u\cdot\nabla d)\cdot\nabla \Delta d\,dx-\lambda \int\nabla(|\nabla d|^2d)\nabla\Delta d\,dx\\
    &\leq C (\|\nabla u\|_{L^2}\|\nabla d\|_{L^\infty}+\|u\|_{L^6}\|\nabla^2d\|_{L^3})\|\nabla\Delta d\|_{L^2}\\
    &\quad + C\lambda ( \|\nabla d\|_{L^3}\|\nabla^2d\|_{L^6}+ \|\nabla d\|_{L^\infty}^{\frac{3}{2}}\|\nabla d\|_{L^3}^{\frac{3}{2}})\|\nabla\Delta d\|_{L^2}\\
    &\leq C \|\nabla u\|_{L^2} \|\nabla^2 d\|_{L^2}^{\frac{1}{2}} \|\nabla^3 d\|_{L^2}^{\frac{3}{2}} + C\lambda (\|\nabla d\|_{L^3}+\|\nabla d\|_{L^3}^2)\|\nabla^3d\|_{L^2}^2\\
    &\leq C(\eta+\lambda \|\nabla d\|_{L^3}+\lambda \|\nabla d\|_{L^3}^2)\|\nabla^3d\|_{L^2}^2+C_\eta\|\nabla^2 d\|_{L^2}^2 \|\nabla u\|_{L^2}^4,
\end{aligned}
\end{equation}
where we have used 
$$\|\nabla d\|_{L^\infty} + \|\nabla^2 d\|_{L^3} \le C\|\nabla^3 d\|_{L^2}^{\frac{1}{2}}\|\nabla^2 d\|_{L^2}^\frac{1}{2},$$
\begin{equation}\label{GN-d-2}
   \|\nabla^2 d\|_{L^3}\leq C\|\nabla^3 d\|_{L^2}^{\frac{2}{3}}\|\nabla d\|_{L^3}^\frac{1}{3},~~\|\nabla d\|_{L^\infty}\leq C\|\nabla d\|_{L^3}^\frac{1}{3}\|\nabla^3 d\|_{L^2}^\frac{2}{3}, 
\end{equation}
and
\begin{equation*}
-\int\partial_t\nabla d\cdot\nabla\Delta ddx=\frac{1}{2}\frac{d}{dt}\|\nabla^2 d\|_{L^2}^2.
\end{equation*}
Then choosing $\eta$ small enough, we have
\begin{equation}\label{d-2nd1}
    \begin{aligned}
    \frac{d}{dt}\|\nabla^2 d\|_{L^2}^2
    +\lambda\|\nabla^3 d\|_{L^2}^2
    \leq& \lambda c_3 \delta \|\nabla^3 d\|_{L^2}^2+C\|\nabla^2 d\|_{L^2}^2 \|\nabla u\|_{L^2}^4.
\end{aligned}
\end{equation} 
Then after choosing 
$$\delta<\frac{1}{2c_3},$$
with $c_3$ depending only on Sobolev constants. 
Gronwall's inequality implies 
\begin{equation}\label{d-2nd2}
    \begin{aligned}
    &\sup_{0\le t\le T}\|\nabla^2 d\|_{L^2}^2 
    +\frac{\lambda}{2}\int_{0}^{T}\|\nabla^3 d\|_{L^2}^2 dt
    \leq  \|\nabla^2 d_0\|_{L^2}^2 \cdot \exp{\left\{C\int_{0}^{T}\|\nabla u\|_{L^2}^4dt\right\}} \\
    \le&  \|\nabla^2 d_0\|_{L^2}^2 \cdot \exp\{C \|\nabla u_0\|_{L^2} \bar\rho^{1-\alpha}\} \le C \|\nabla^2 d_0\|_{L^2}^2 .
\end{aligned}
\end{equation}
\end{proof}


Next, we deal with the following high-order estimate of the velocities which will be used frequently.

\begin{lemma}
	Under the assumption $\mathcal{E}_{\rho,2}(T) \le 2N_1 \bar\rho^{\beta}, \mathcal{E}_{\rho,3}(T) \le 2N_2 \bar\rho^{\beta}$, it holds that 
	\begin{equation}\label{H2}
	    \| \nabla  u\|_{L^6} \leq C \left(\bar\rho^{\frac{1}{2}-\alpha}\|\sqrt{\rho} u_t\|_{L^2} +\|\nabla u\|_{L^2} 
        +\bar \rho^{-\alpha+\gamma-1}\|\nabla \rho\|_{L^2}
        +\bar\rho^{-\alpha}\|\nabla d\|_{L^3}\|\nabla^3d\|_{L^2}\right),
	\end{equation}
		and
        \begin{equation}\label{W2p}
        \begin{aligned}
         \|\nabla u\|_{L^\infty} \le&  C \left(\bar\rho^{-\alpha}\|\rho u_t\|_{L^q}  + \bar \rho^{\gamma-\alpha-1}\|\nabla \rho \|_{L^q} + \bar\rho^{-\alpha} \|\nabla^2 d\|_{L^2}^{\frac{3}{q}}\|\nabla^3 d\|_{L^2}^{2-\frac{3}{q}}+\|\nabla u\|_{L^2} \right).
        \end{aligned}
        \end{equation}	
\end{lemma}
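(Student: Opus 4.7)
The plan is to recast the momentum equation \eqref{ins}$_2$ as a Lamé-type elliptic system for $u$ with the density-dependent coefficient $\rho^\alpha$, and then apply standard Calderón–Zygmund $L^p$-theory, viewing everything else as a forcing. Concretely, expanding $\mathrm{div}\,\mathbb{T}$ and moving all derivatives of $\rho^\alpha$ to the right gives
\begin{equation*}
-\mu_1\rho^\alpha\Delta u-(\mu_1+\mu_2)\rho^\alpha\nabla\mathrm{div}\,u=F,
\end{equation*}
where
\begin{equation*}
F:=-\rho u_t-\rho u\cdot\nabla u-\nabla P+\alpha\rho^{\alpha-1}\bigl(2\mu_1\nabla\rho\cdot\mathcal{D}u+\mu_2\,\mathrm{div}\,u\,\nabla\rho\bigr)-\nu\,\mathrm{div}\bigl(\nabla d\odot\nabla d-\tfrac12|\nabla d|^2\mathbb{I}_3\bigr).
\end{equation*}
Because \eqref{upbd} pins $\rho^\alpha\sim\bar\rho^\alpha$, dividing by $\rho^\alpha$ and invoking the constant-coefficient Lamé $L^p$ estimate yields $\|\nabla^2u\|_{L^p}\le C\bar\rho^{-\alpha}\|F\|_{L^p}$ for $p\in\{2,q\}$; \eqref{H2} then follows via $\|\nabla u\|_{L^6}\le C\|\nabla^2u\|_{L^2}$, and \eqref{W2p} via the Morrey embedding $W^{1,q}\hookrightarrow L^\infty$ for $q>3$ together with $\|\nabla u\|_{L^q}\le C(\|\nabla u\|_{L^2}+\|\nabla^2u\|_{L^q})$.

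Next I would estimate each piece of $\|F\|_{L^p}$. The inertial and pressure contributions are immediate: $\bar\rho^{-\alpha}\|\rho u_t\|_{L^2}\le\bar\rho^{1/2-\alpha}\|\sqrt{\rho}u_t\|_{L^2}$, $\bar\rho^{-\alpha}\|\rho u_t\|_{L^q}$ appears directly, and $\bar\rho^{-\alpha}\|\nabla P\|_{L^p}\le C\bar\rho^{\gamma-\alpha-1}\|\nabla\rho\|_{L^p}$ since $P=a\rho^\gamma$ and $\rho\sim\bar\rho$. The director forcing reduces to $\nabla d\cdot\nabla^2d$; for $p=2$, Hölder gives $\|\nabla d\|_{L^3}\|\nabla^2d\|_{L^6}\le C\|\nabla d\|_{L^3}\|\nabla^3d\|_{L^2}$, and for $p=q$, I would split as $\|\nabla d\|_{L^6}\|\nabla^2d\|_{L^{6q/(6-q)}}$ and apply Gagliardo–Nirenberg in the form $\|\nabla d\|_{L^6}\le C\|\nabla^2d\|_{L^2}$ and $\|\nabla^2d\|_{L^{6q/(6-q)}}\le C\|\nabla^2d\|_{L^2}^{3/q-1}\|\nabla^3d\|_{L^2}^{2-3/q}$, producing exactly the factor $\|\nabla^2d\|_{L^2}^{3/q}\|\nabla^3d\|_{L^2}^{2-3/q}$.

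The hard part will be the convective term $\rho u\cdot\nabla u$ and the viscosity commutator $\rho^{\alpha-1}\nabla\rho\cdot\nabla u$, both of which a priori produce factors of $\|\nabla u\|_{L^p}$ on the right and must be reabsorbed. For the convective term in $L^2$, I would use Hölder and Gagliardo–Nirenberg to get $\bar\rho^{-\alpha}\|\rho u\cdot\nabla u\|_{L^2}\le C\bar\rho^{1-\alpha}\|u\|_{L^6}\|\nabla u\|_{L^3}\le C\bar\rho^{1-\alpha}\|\nabla u\|_{L^2}^{3/2}\|\nabla u\|_{L^6}^{1/2}$, after which Young's inequality gives $\varepsilon\|\nabla u\|_{L^6}+C\bar\rho^{2(1-\alpha)}\|\nabla u\|_{L^2}^3$. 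The first piece is absorbed into the left-hand side; the second rewrites as $\bigl(\bar\rho^{2(1-\alpha)}\|\nabla u\|_{L^2}^2\bigr)\|\nabla u\|_{L^2}\le C\|\nabla u\|_{L^2}$, using $\alpha>1$ (so $\bar\rho^{2(1-\alpha)}\le 1$) together with the a priori bound $\|\nabla u\|_{L^2}^2\le C$ implied by $\mathcal{E}_{u,1}(T)\le C\bar\rho^\alpha$. For the commutator, interpolation gives $\|\nabla\rho\|_{L^3}\le C(\|\nabla\rho\|_{L^2}+\|\nabla\rho\|_{L^q})\le C\bar\rho^{\beta/2}$ from the a priori bounds on $\mathcal{E}_{\rho,2}$ and $\mathcal{E}_{\rho,3}$, so $\bar\rho^{-\alpha}\|\rho^{\alpha-1}\nabla\rho\cdot\mathcal{D}u\|_{L^2}\le C\bar\rho^{\beta/2-1}\|\nabla u\|_{L^6}$, and $\beta=\max\{3-\gamma,0\}$ with $\gamma>1$ forces $\beta/2-1<0$, so this term is absorbed once $\bar\rho\ge\Lambda_0$.

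The $L^q$ versions feeding \eqref{W2p} run along the same lines: the convective term is bounded by $\bar\rho\|u\|_{L^\infty}\|\nabla u\|_{L^q}$ with $\|u\|_{L^\infty}$ controlled via Gagliardo–Nirenberg in terms of $\|\nabla u\|_{L^2}$ and $\|\nabla^2u\|_{L^q}$; the commutator is bounded by $\bar\rho^{\alpha-1}\|\nabla\rho\|_{L^q}\|\nabla u\|_{L^\infty}$ with $\|\nabla u\|_{L^\infty}$ controlled by the Morrey embedding, and Young's inequality again absorbs the $\|\nabla^2u\|_{L^q}$ factor with a coefficient that is a negative power of $\bar\rho$ by virtue of \eqref{vis-c}. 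The only real technical care is in balancing these $\bar\rho$ exponents so the absorption is legitimate precisely under $\alpha>1$ and $\alpha\ge\gamma-1$; once that bookkeeping is done, collecting the surviving terms gives \eqref{H2} and \eqref{W2p}.
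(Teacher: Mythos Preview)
Your proposal is correct and follows essentially the same route as the paper. The only cosmetic difference is that the paper writes the divided-through equation as $\mu_1\Delta u+(\mu_1+\mu_2)\nabla\mathrm{div}\,u=\nabla\mathcal{P}+H$ and then introduces the effective viscous flux $F=(2\mu_1+\mu_2)\mathrm{div}\,u+\mathcal{P}(\rho)-\mathcal{P}(\bar\rho)$ and vorticity $w=\nabla\times u$ to obtain $\|\nabla F\|_{L^p}+\|\nabla w\|_{L^p}\le C\|H\|_{L^p}$, whereas you invoke the constant-coefficient Lam\'e estimate directly; these are equivalent. Two minor points: (i) the paper handles the commutator via the split $\bar\rho^{-1}\|\nabla\rho\|_{L^q}\|\nabla u\|_{L^{2q/(q-2)}}$ rather than your $\|\nabla\rho\|_{L^3}\|\nabla u\|_{L^6}$, and for the $L^q$ bound places the H\"older split as $\|u\|_{L^q}\|\nabla u\|_{L^\infty}$ rather than $\|u\|_{L^\infty}\|\nabla u\|_{L^q}$---both choices work; (ii) the absorption here only needs $\alpha>1$ and $\beta<2$, not $\alpha\ge\gamma-1$, and like the paper you are tacitly using the bound $\|\nabla u\|_{L^2}^2\le C$ from $\mathcal{E}_{u,1}$, which is part of the ambient bootstrap hypothesis \eqref{a1} even though the lemma statement lists only the density assumptions.
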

\begin{proof}
    Rewrite \eqref{ins}$_2$ as 
\begin{equation}\label{elliptic}
    \mu_1 \Delta u +(\mu_1+\mu_2) \nabla \dv u =  \nabla \mathcal{P}+H,
\end{equation}
with
\begin{equation*}
    \nabla \mathcal{P}=
        \begin{cases}
            \frac{a\gamma}{\gamma-\alpha} \nabla \rho^{\gamma-\alpha}, \quad \gamma \neq \alpha,\\
            a\gamma \nabla \ln \rho, \quad \gamma=\alpha,
        \end{cases}
\end{equation*}
and
$$H \triangleq \rho^{1-\alpha}\dot u
+\alpha\rho^{-1}\nabla\rho\cdot(2\mu \mathcal{D}u +\lambda \dv u) - \nu \rho^{-\alpha} \nabla d \cdot \Delta d .$$
Define the efficient viscous flux $F$ and vorticity $w$ respect to \eqref{elliptic} as
\begin{equation}\label{flux}
    F=(2\mu_1+\mu_2) \dv u +  \mathcal{P}(\rho)-\mathcal{P}(\bar \rho),\quad w=\nabla \times u,
\end{equation}
then \eqref{elliptic} implies
\begin{gather}
    \label{f1}-\Delta F=\dv H,\\
    -\mu \Delta w =\nabla \times H.
\end{gather}

According to the standard $L^2$-estimate of the elliptic system, one has after using \eqref{upbd}, \eqref{basic-est-0}, and Sobolev's inequality in Lemma \ref{G-N},
\begin{equation}\label{e1}
    \begin{aligned}
        &\|\nabla u\|_{L^6} \le C \|\nabla^2 u\|_{L^2}\\
        \le & C( \|\nabla F\|_{L^2} + \|\nabla w\|_{L^2}+ \bar \rho^{-\alpha}\|\nabla P(\rho)\|_{L^2} )\\
        \le & C \left(\bar\rho^{-\alpha+\frac12}\|\sqrt{\rho}u_t\|_{L^2}+ \bar\rho^{-\alpha+1} \|\nabla u\|_{L^2}^{\frac{3}{2}} \|\nabla u\|_{L^6}^{\frac{1}{2}} + \bar\rho^{-1} \|\nabla \rho \|_{L^q} \|\nabla u\|_{L^{2}}^{\frac{q-3}{q}} \|\nabla u\|_{L^6}^{\frac{3}{q}} \right) \\
        &+ C \left(\bar \rho^{-\alpha+\gamma-1}\|\nabla \rho\|_{L^2} +\bar\rho^{-\alpha}\|\nabla d\|_{L^3}\|\nabla^3d\|_{L^2} \right)\\
        \le & \frac{1}{2} \|\nabla u\|_{L^6} + C \left( \bar\rho^{-\alpha+\frac12}\|\sqrt{\rho}u_t\|_{L^2}+ \bar\rho^{-2\alpha+2} \|\nabla u\|_{L^2}^{3}  + \bar\rho^{-\frac{q}{q-3}} \|\nabla \rho \|_{L^q}^{\frac{q}{q-3}} \|\nabla u\|_{L^{2}} \right)\\
        &+ C \left(\bar \rho^{-\alpha+\gamma-1}\|\nabla \rho\|_{L^2} + \bar\rho^{-\alpha}\|\nabla d\|_{L^3}\|\nabla^3d\|_{L^2} \right),
    \end{aligned}
\end{equation}
where we have used  
\begin{equation}\label{e2}
    \begin{aligned}
        &\|\nabla F\|_{L^2} + \|\nabla w\|_{L^2} \le C \|H\|_{L^2}\\
        \le& C \bar\rho^{-\alpha}\|-\rho(u_t+u\cdot \nabla u)+2\mu \nabla \rho^\alpha \cdot \mathcal{D}u +\lambda \nabla \rho^{\alpha} \dv u - \nu  \nabla d \cdot \Delta d\|_{L^2}\\
        \le & C \left(\bar\rho^{-\alpha+\frac12}\|\sqrt{\rho}u_t\|_{L^2}+ \bar\rho^{-\alpha+1} \|\nabla u\|_{L^2}^{\frac{3}{2}} \|\nabla u\|_{L^6}^{\frac{1}{2}}\right)\\
        & + C \left( \bar\rho^{-1} \|\nabla \rho \|_{L^q} \|\nabla u\|_{L^{\frac{2q}{q-2}}}  + \bar\rho^{-\alpha}\|\nabla d\|_{L^3}\|\nabla^3d\|_{L^2} \right)\\
        \le & C \left( \bar\rho^{-\alpha+\frac12}\|\sqrt{\rho}u_t\|_{L^2}+ \bar\rho^{-\alpha+1} \|\nabla u\|_{L^2}^{\frac{3}{2}} \|\nabla u\|_{L^6}^{\frac{1}{2}} \right)\\
        & + \left( \bar\rho^{-1} \|\nabla \rho \|_{L^q} \|\nabla u\|_{L^{2}}^{\frac{q-3}{q}} \|\nabla u\|_{L^6}^{\frac{3}{q}} +\bar\rho^{-\alpha}\|\nabla d\|_{L^3}\|\nabla^3d\|_{L^2}\right).
    \end{aligned}
\end{equation}

Note that by Sobolev's inequality,
    \begin{equation}\label{rho1}
        \begin{aligned}
        \|\nabla u\|_{L^\infty} \le & C \|\nabla u\|_{L^2}^{\theta} \|\nabla^2 u\|_{L^q}^{1-\theta}\\
        \end{aligned}
    \end{equation}
    where 
    \begin{equation}\label{theta}
        \theta=\frac{2(q-3)}{5q-6}.
    \end{equation}
    
    Similarly, the standard $L^p$-estimate of the elliptic system gives that
    \begin{equation}\label{e-p}
        \begin{aligned}
         \|\nabla^2 u\|_{L^q} \le & C( \|\nabla F\|_{L^q} + \|\nabla w\|_{L^q}  + \|\nabla \mathcal{P}\|_{L^q})\\
        \le& C \left(\bar\rho^{-\alpha}\|\rho u_t\|_{L^q} + \bar \rho^{\gamma-\alpha-1}\|\nabla \rho \|_{L^q} + \bar \rho^{-\alpha}\|\nabla d\|_{L^{\frac{6q}{6-q}}}\|\nabla^2d\|_{L^6}\right)\\
        & +C \left( \bar\rho^{-\alpha+1}  \| u\|_{L^q} \| \nabla u\|_{L^\infty} + \bar\rho^{-1} \|\nabla \rho\|_{L^q} \| \nabla u\|_{L^\infty} \right)\\
        \le& C \left(\bar\rho^{-\alpha}\|\rho u_t\|_{L^q} + \bar \rho^{\gamma-\alpha-1}\|\nabla \rho \|_{L^q} + \bar \rho^{-\alpha} \|\nabla d\|_{L^3}^{\frac{2}{q}}\|\nabla^2 d\|_{L^6}^{2-\frac{2}{q}}\right)\\
        & + C \left( \bar\rho^{-\alpha+1}  \| u\|_{L^q} \|\nabla u\|_{L^2}^{\theta} \|\nabla^2 u\|_{L^q}^{1-\theta} + \bar\rho^{-1} \|\nabla \rho\|_{L^q} \|\nabla u\|_{L^2}^{\theta} \|\nabla^2 u\|_{L^q}^{1-\theta} \right)\\
        \le & \frac{1}{2} \|\nabla^2 u\|_{L^q} + C \left(\bar\rho^{-\alpha}\|\rho u_t\|_{L^q}  + \bar \rho^{\gamma-\alpha-1}\|\nabla \rho \|_{L^q} + \bar\rho^{-\alpha} \|\nabla d\|_{L^3}^{\frac{2}{q}}\|\nabla^3 d\|_{L^2}^{2-\frac{2}{q}}\right)\\
         & +  C \left( \bar\rho^{\frac{-\alpha+1}{\theta}}  \| u\|_{L^q}^{\frac{1}{\theta}} \|\nabla u\|_{L^2} + \bar\rho^{-\frac{1}{\theta}} \|\nabla \rho\|_{L^q}^{\frac{1}{\theta}} \|\nabla u\|_{L^2} \right),
        \end{aligned}
    \end{equation}
    where we have used 
    \begin{equation}\label{e3}
    \begin{aligned}
        &\|\nabla F\|_{L^q} + \|\nabla w\|_{L^q} \le C \|H\|_{L^q}\\
        \le& C \bar\rho^{-\alpha}\|-\rho(u_t+u\cdot \nabla u)+2\mu \nabla \rho^\alpha \cdot \mathcal{D}u +\lambda \nabla \rho^{\alpha} \dv u - \nu  \nabla d \cdot \Delta d\|_{L^q}\\
        \le& C \left(\bar\rho^{-\alpha}\| \rho u_t\|_{L^q}  + \bar\rho^{-\alpha+1}  \| u\|_{L^q} \| \nabla u\|_{L^\infty} + \bar\rho^{-1} \|\nabla \rho\|_{L^q} \| \nabla u\|_{L^\infty} + \bar \rho^{-\alpha}\|\nabla d\|_{L^{\frac{6q}{6-q}}}\|\nabla^2d\|_{L^6}\right).
    \end{aligned}
    \end{equation}

    Finally, it follows from \eqref{e1} and \eqref{e-p} that
    \begin{equation}\label{H2}
    \begin{aligned}
        &\| \nabla  u\|_{L^6} \\
        \leq & C \bigg(\bar\rho^{\frac{1}{2}-\alpha}\|\sqrt{\rho} u_t\|_{L^2} +\bar\rho^{2-2\alpha}\|\nabla u\|_{L^2}^3 + \bar\rho^{-\frac{q}{q-3}} \|\nabla \rho \|_{L^q}^{\frac{q}{q-3}} \|\nabla u\|_{L^{2}} \\
        &+ \bar \rho^{-\alpha+\gamma-1}\|\nabla \rho\|_{L^2} +\bar\rho^{-\alpha}\|\nabla d\|_{L^3}\|\nabla^3d\|_{L^2} \bigg)\\
        \le & C \left(\bar\rho^{\frac{1}{2}-\alpha}\|\sqrt{\rho} u_t\|_{L^2} +\|\nabla u\|_{L^2} 
        +\bar \rho^{-\alpha+\gamma-1}\|\nabla \rho\|_{L^2} + \bar\rho^{-\alpha}\|\nabla d\|_{L^3}\|\nabla^3d\|_{L^2} \right),
    \end{aligned}
    \end{equation}
    and 
    \begin{equation}\label{W2p}
        \begin{aligned}
         \|\nabla^2 u\|_{L^q} \le&  C \left(\bar\rho^{-\alpha}\|\rho u_t\|_{L^q}  + \bar \rho^{\gamma-\alpha-1}\|\nabla \rho \|_{L^q} + \bar\rho^{-\alpha} \|\nabla^2 d\|_{L^2}^{\frac{3}{q}}\|\nabla^3 d\|_{L^2}^{2-\frac{3}{q}}\right)\\
         & +  C \left( \bar\rho^{\frac{-\alpha+1}{\theta}}  \| u\|_{L^q}^{\frac{1}{\theta}} \|\nabla u\|_{L^2} + \bar\rho^{-\frac{1}{\theta}} \|\nabla \rho\|_{L^q}^{\frac{1}{\theta}} \|\nabla u\|_{L^2} \right)\\
         \le&  C \left(\bar\rho^{-\alpha}\|\rho u_t\|_{L^q}  + \bar \rho^{\gamma-\alpha-1}\|\nabla \rho \|_{L^q} + \bar\rho^{-\alpha} \|\nabla^2 d\|_{L^2}^{\frac{3}{q}}\|\nabla^3 d\|_{L^2}^{2-\frac{3}{q}}+\|\nabla u\|_{L^2} \right),
        \end{aligned}
        \end{equation}
        due to $\alpha>1$ , $\beta<2$ and $q\in(3,6)$.
\end{proof}

Now we are ready to deal with an estimate of $\mathcal{E}_{u,1}(T)$.
\begin{lemma}\label{L_2}
Under the assumption \eqref{a1}, and 
\begin{equation}
    \alpha>\frac{\gamma+1}{2}, \alpha\ge \gamma-1,
\end{equation}
then there exists a positive constant $\Lambda_2$ such that 
\begin{equation}
\mathcal{E}_{u,1}(T)\le 2^{\alpha+1}N_3 \bar\rho^{\alpha},
\end{equation}
with $N_3$ defined in \eqref{m1}, provided $\bar \rho\ge \Lambda_2= \Lambda_2(\mu_1,\mu_2,\nu, \lambda, \alpha,a, \|\nabla \rho_0\|_{L^q}, \| u_0\|_{H^1}, \|\nabla^2 d_0\|_{L^2})$.
\end{lemma}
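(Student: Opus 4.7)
The natural route is the classical ``multiply by $u_t$'' energy argument, adapted to the density-dependent viscosities and the new director coupling. Concretely, I would take $\eqref{ins}_2$, dot with $u_t$, integrate over $\mathbb{R}^3$, and use integration by parts to rewrite the viscous contribution as
\[
\tfrac{d}{dt}\!\int\!\bigl(\mu_1(\rho)|\mathcal{D}u|^2 + \tfrac12\mu_2(\rho)(\mathrm{div}\,u)^2\bigr)dx\;-\;\tfrac12\!\int\!\bigl((\mu_1(\rho))_t|\mathcal{D}u|^2+\tfrac12(\mu_2(\rho))_t(\mathrm{div}\,u)^2\bigr)dx,
\]
producing $\int\rho|u_t|^2\,dx$ and a time derivative of a functional comparable to $\bar\rho^{\alpha}\|\nabla u\|_{L^2}^2$. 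The pressure contribution $-\!\int\nabla P\cdot u_t\,dx$ I would integrate by parts in $x$, then split off $\tfrac{d}{dt}\!\int(P-P(\bar\rho))\mathrm{div}\,u\,dx$ and use the continuity equation to replace $P_t$.

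The right-hand side then decomposes into four estimable blocks. The convection term is bounded by $\tfrac18\|\sqrt{\rho}\,u_t\|_{L^2}^2+C\bar\rho\|\nabla u\|_{L^2}^3\|\nabla u\|_{L^6}$, with $\|\nabla u\|_{L^6}$ controlled via \eqref{H2}; here $\alpha>1$ and the a priori bounds \eqref{a1} will guarantee that the overall factor on $\|\nabla u\|_{L^2}^2$ stays $L^1$ in time. The viscosity-commutator term $\int(\mu_i(\rho))_t|\nabla u|^2\,dx$, rewritten via continuity as $\int\!(\mathrm{div}(\rho u))\rho^{\alpha-1}|\nabla u|^2\,dx$, is controlled by $\bar\rho^{\alpha}\|\nabla u\|_{L^6}\|\nabla u\|_{L^2}^{2}\|\nabla u\|_{L^3}$ plus a $\nabla\rho$ term, again absorbed using \eqref{H2} and the bound on $\mathcal{E}_{\rho,2},\mathcal{E}_{\rho,3}$. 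The new term, relative to pure Navier--Stokes, is the director coupling $-\nu\!\int\nabla d\cdot\Delta d\cdot u_t\,dx$: by Hölder in the configuration $L^3$--$L^6$--$L^2$ and the density lower bound,
\[
\bigl|\nu\!\int\nabla d\cdot\Delta d\cdot u_t\,dx\bigr|
\le C\|\nabla d\|_{L^3}\|\nabla^3 d\|_{L^2}\bar\rho^{-1/2}\|\sqrt{\rho}\,u_t\|_{L^2}
\le \tfrac18\|\sqrt{\rho}\,u_t\|_{L^2}^2+C\bar\rho^{-1}\delta^{2}\|\nabla^3 d\|_{L^2}^2,
\]
and the final bad factor is integrable in $t$ by Lemma \ref{3d}.

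After absorbing all $\|\sqrt{\rho}u_t\|_{L^2}^2$ pieces into the left-hand side, I expect a differential inequality of the form
\[
\tfrac{d}{dt}\Psi(t)+\tfrac12\|\sqrt{\rho}u_t\|_{L^2}^2\le A(t)\,\Psi(t)+B(t),
\]
where $\Psi(t)\sim \bar\rho^{\alpha}\|\nabla u\|_{L^2}^2$ up to lower-order pressure-type contributions controlled by $\mathcal{E}_{\rho,3}$, and $\int_0^T A(t)\,dt$ and $\int_0^T B(t)\,dt$ are bounded by $o(1)$ with respect to $\bar\rho^{-1}$ (using $\alpha>1$, $\alpha>(\gamma+1)/2$, $\alpha\ge\gamma-1$, and the smallness of $\delta$). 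Applying Gronwall's inequality and integrating from $0$ to $T$, and using $\Psi(0)\le C\bar\rho^{\alpha}\|\nabla u_0\|_{L^2}^2$, yields the desired bound after fixing $N_3$ depending only on $\mu_1,\mu_2,\|\nabla u_0\|_{L^2}$ and taking $\bar\rho\ge\Lambda_2$ large enough.

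The main obstacle is the bookkeeping of $\bar\rho$-powers: I must ensure that every cross term picks up a strictly negative exponent in $\bar\rho$ so that the Gronwall factors are genuinely $o(1)$ as $\bar\rho\to\infty$. This is where the three hypotheses $\alpha>1$, $\alpha>(\gamma+1)/2$, $\alpha\ge\gamma-1$ are all used, exactly as in the pure compressible Navier--Stokes analogue of \cite{HuangLiZhang2024}; the director-coupling term requires no new hypothesis, being tamed by the smallness of $\|\nabla d\|_{L^3}$ already established and the time-integrability of $\|\nabla^3 d\|_{L^2}^2$ from Lemma \ref{3d}.
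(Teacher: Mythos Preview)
Your plan is essentially the paper's: multiply $\eqref{ins}_2$ by $u_t$, write the viscous part as a time derivative plus $(\rho^\alpha)_t$-commutators, move the pressure into $\tfrac{d}{dt}\!\int(P-P(\bar\rho))\,\mathrm{div}\,u\,dx$, bound convection by $C\bar\rho\|\nabla u\|_{L^2}^3\|\nabla u\|_{L^6}$, and close via \eqref{H2} and the a~priori assumptions \eqref{a1}. The paper does not invoke Gronwall at the end but simply integrates in time and bounds each resulting integral directly using \eqref{a1}; your Gronwall variant works too since $\int_0^T A(t)\,dt\to 0$ as $\bar\rho\to\infty$.

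The one genuine difference is your handling of the director coupling $-\nu\!\int u_t\cdot\nabla d\cdot\Delta d\,dx$. The paper integrates it by parts and absorbs $\int(\nabla d\odot\nabla d:\nabla u-\tfrac12|\nabla d|^2\mathrm{div}\,u)\,dx$ into the functional $B(t)$, then has to estimate the five remainder terms $I_1$--$I_5$ coming from $\partial_t(\nabla d\odot\nabla d)$, each of which still needs $\int_0^T\|\nabla^3 d\|_{L^2}^2\,dt$ from Lemma~\ref{3d}. Your direct $L^3$--$L^6$--$L^2$ H\"older estimate is shorter, yields the same final contribution $C\bar\rho^{-1}\delta^2\!\int_0^T\|\nabla^3 d\|_{L^2}^2\,dt=O(\bar\rho^{-1})$, and bypasses those five terms entirely. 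Both routes are correct; yours is the more economical one here.

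Two bookkeeping corrections: (i) the pressure correction $\int(P-P(\bar\rho))\,\mathrm{div}\,u\,dx$ inside $\Psi(t)$ is not controlled by $\mathcal{E}_{\rho,3}$ but by the basic energy \eqref{basic-est-0}, giving $\bar\rho^{\gamma-1}\|\rho-\bar\rho\|_{L^2}\|\nabla u\|_{L^2}\le C\bar\rho^{(\gamma+1)/2}$ --- this is precisely where $\alpha>(\gamma+1)/2$ is used; (ii) the source term satisfies $\int_0^T B(t)\,dt=o(\bar\rho^{\alpha})$, not $o(1)$: contributions like $\bar\rho^{(\gamma+1)/2}$, $\bar\rho^{\gamma+1-\alpha}$, $\bar\rho^{\alpha+\frac34(1-\alpha)}$ survive and are absorbed only by taking $\bar\rho\ge\Lambda_2$.
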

\begin{proof}
	Multiplying \eqref{ins}$_2$ by $u_t$, and integrating by parts, we have that 
	\begin{equation}\label{51}
		\begin{aligned}
			&\frac{d}{dt}\int (\mu_1 \rho^\alpha |\mathcal{D}(u)|^2 + \mu_2 \rho^\alpha |\dv u|^2 )dx -\frac{d}{dt}\int  (P(\rho)-P(\bar\rho)) \dv u  dx +  \int\rho |u_t|^2dx \\
			&= -\int  \rho u \cdot \nabla u \cdot u_t dx+ \int  \mu_1(\rho^\alpha)_t |\mathcal{D}(u)|^2 dx + \int \mu_2 (\rho^\alpha)_t|\dv u|^2 dx - \int P_t \dv u dx\\
            &\quad -\nu \int u_t \cdot  \nabla d \cdot \Delta d  dx.
		\end{aligned}
	\end{equation}
	It follows from H\"older and Sobolev inequalities that 
	\begin{equation}\label{52}
        \begin{aligned}
		&\int  \rho u \cdot \nabla u \cdot u_t dx\\
        &\leq C\bar\rho^\frac{1}{2}\|\sqrt{\rho} u_t\|_{L^2}\|u\|_{L^6}\|\nabla u\|_{L^3} \\
        &\leq \frac{1}{16}\|\sqrt{\rho} u_t\|_{L^2}^2+C \bar\rho \|\nabla u\|_{L^2}^3\|\nabla u\|_{L^6}.\\        
        \end{aligned}
	\end{equation}
Using 
\eqref{ins}$_1$, which together with Sobolev inequalities yields
\begin{equation}\label{53}
\begin{aligned}
&\int\mu_1(\rho^\alpha)_t |\mathcal{D}(u)|^2 dx + \int \mu_2 (\rho^\alpha)_t|\dv u|^2 dx\\
&\le C \bar \rho^{\alpha-1} \int  \left|\rho\dv u+\nabla \rho \cdot u\right||\nabla u|^2 dx\\
&\le C\bar\rho^\alpha  \|\nabla u\|_{L^3}^3 + C \bar \rho^{\alpha-1} \|\nabla \rho \|_{L^q}\|u\|_{L^6} \|\nabla u\|_{L^{\frac{12q}{5q-6}}}^2\\
&\le C \bar \rho^\alpha \|\nabla u\|_{L^2}^{\frac{3}{2}} \|\nabla u\|_{L^6}^{\frac{3}{2}} + C\bar{\rho}^{\alpha-1} \|\nabla \rho \|_{L^q}\|\nabla u\|_{L^2}^{\frac{5q-6}{2q}} \|\nabla u\|_{L^6}^{\frac{q+6}{2q}}\\
&\le C \bar \rho^\alpha \|\nabla u\|_{L^2}^{\frac{3}{2}} \|\nabla u\|_{L^6}^{\frac{3}{2}} + C\bar{\rho}^{\alpha} \|\nabla u\|_{L^2}^{2} \|\nabla u\|_{L^6},
\end{aligned}
\end{equation}
due to $\beta<2, q\in(3,6)$.
Similarly,
    \begin{equation}
        \begin{aligned}
        \int P_t \dv u dx&\le C \bar \rho^{\gamma-1} \int  \left|\rho \dv u+ \nabla \rho \cdot u\right||\dv u| dx\\
        &\le C \bar \rho^\gamma \|\nabla u\|_{L^2}^2 + C \bar \rho^{\gamma-1} \|\nabla \rho \|_{L^q}\|u\|_{L^\frac{2q}{q-2}} \|\nabla u\|_{L^2}\\
        &\le C\bar \rho^\gamma \|\nabla u\|_{L^2}^{2}+ C \bar \rho^{\gamma-1} \|\nabla \rho \|_{L^q}\|u\|_{L^2}^{\frac{q-3}{q}} \|\nabla u\|_{L^2}^{\frac{3+q}{q}}\\
        &\le C\bar \rho^\gamma \|\nabla u\|_{L^2}^{2}+ C \bar \rho^{\gamma-1} \|\nabla \rho \|_{L^q} \|\nabla u\|_{L^2}\\
        &\le C\bar \rho^\gamma \|\nabla u\|_{L^2}^{2} + C \bar \rho^{\gamma-2} \|\nabla \rho \|_{L^q}^2,
        \end{aligned}
    \end{equation}
    where we have used $q\in(3,6)$ and \eqref{a1}.

Using integration by parts and $\eqref{ins}_3$ yield that
\begin{equation}
\begin{aligned}
    &-\int  u_t \cdot \nabla d \cdot \Delta d dx=\int \nabla d\odot\nabla d:\nabla u_t dx - \frac 12 \int \dv u_t |\nabla d|^2dx
\\
    &=\frac{d}{dt} \left(\int\nabla d\odot\nabla d:\nabla u dx - \frac 12 \int \dv u |\nabla d|^2dx \right)\\
    &\quad -2\int\nabla d_t \odot\nabla d:\mathcal{D}(u) dx + \int  \nabla d_t : \nabla d \dv u dx
\\
    &\leq \frac{d}{dt} \left(\int\nabla d\odot\nabla d:\nabla u dx - \frac 12 \int \dv u |\nabla d|^2dx \right) \\
    &\quad+C \int|\nabla u|^2|\nabla d|^2 dx+ C\int|u||\nabla^2 d||\nabla d||\nabla u| dx
\\
    &\quad +C\int|\nabla\Delta d||\nabla d||\nabla u| dx+C\int|\nabla d|^2|\nabla^2d||\nabla u| dx+C\int|\nabla d|^4|\nabla u| dx\\
    &=\frac{d}{dt} \left(\int\nabla d\odot\nabla d:\nabla u dx - \frac 12 \int \dv u |\nabla d|^2dx \right) +\sum_{i=1}^5I_i.
\end{aligned}    
\end{equation}
It follows from H\"older's inequality, Gagliardo-Nirenberg inequality, and \eqref{H2} that
\begin{equation}
\begin{aligned}\label{I-1}
    I_1+I_2&\leq C\|\nabla d\|_{L^\infty}^2\|\nabla u\|_{L^2}^2+ C\|u\|_{L^6}\|\nabla^2 d\|_{L^6}\|\nabla d\|_{L^6}\|\nabla u\|_{L^2}\\
        &\leq C\|\nabla u\|_{L^2}^2\|\nabla^3 d\|_{L^2}^{\frac{4}{3}}\|\nabla d\|_{L^3}^\frac{2}{3}\\
        &\leq C\|\nabla d\|_{L^3}\|\nabla^3d\|_{L^2}^2
        +C\|\nabla u\|_{L^2}^6,
    \end{aligned}
\end{equation} 
and 
\begin{equation}\label{I-5}
   \begin{aligned}
        I_3+I_4+I_5&\leq C( \|\nabla\Delta d\|_{L^2}\|\nabla d\|_{L^\infty}+\|\nabla d\|_{L^6}^2\|\nabla^2 d\|_{L^6}+\|\nabla d\|_{L^8}^{4}) \|\nabla u\|_{L^2}\\
        &\leq C(\|\nabla d\|_{L^3}^{\frac 13}\|\nabla^3d\|_{L^2}^{\frac{5}{3}} + \|\nabla d\|_{L^3}^{\frac 43}\|\nabla^3d\|_{L^2}^{\frac{5}{3}} +\|\nabla d\|_{L^3}^{\frac 73}\|\nabla^3d\|_{L^2}^{\frac{5}{3}}) \|\nabla u\|_{L^2}\\
        &\leq C\|\nabla d\|_{L^3}^{\frac 25}\|\nabla^3d\|_{L^2}^2+C \|\nabla u\|_{L^2}^6,
    \end{aligned}
\end{equation}
where we have used \eqref{GN-d-2}.

Then \eqref{a1} together with \eqref{51}-\eqref{I-5} implies that 
\begin{equation}\label{t1}
\begin{aligned}
&\frac{d}{dt} B(t) +  \frac{3}{8}\int\rho |u_t|^2dx\\
&\leq C(\bar \rho+\bar \rho^{\gamma})\|\nabla u\|_{L^2}^2   + C \bar \rho^{\gamma-2} \|\nabla \rho \|_{L^q}^2 +C\|\nabla d\|_{L^3}^{\frac{2}{5}}\|\nabla^3d\|_{L^2}^2\\
&\quad +C \bar \rho^\alpha \|\nabla u\|_{L^2}^{\frac{3}{2}} \|\nabla u\|_{L^6}^{\frac{3}{2}} + C(\bar \rho + \bar{\rho}^{\alpha}) \|\nabla u\|_{L^2}^{2} \|\nabla u\|_{L^6},
\end{aligned}
\end{equation}
where
\begin{align*}
    B(t)=\int (\mu_1\rho^\alpha |\mathcal{D}(u)|^2 + \mu_2 \rho^\alpha |\dv u|^2 )dx -\int (P(\rho)-P(\bar\rho)) \dv u dx \\
    - \lambda \int(\nabla d\odot\nabla d:\nabla u - \frac 12 \dv u |\nabla d|^2)dx.
\end{align*}
Note that 
\begin{align*}
    B(t) 
    \le &C \bar \rho^{\alpha} \|\nabla u\|_{L^2}^2 + C  \|P(\rho)-P(\bar \rho)\|_{L^2} \|\dv u\|_{L^2}+ C  \|\nabla d\|_{L^4}^2 \|\dv u\|_{L^2}\\
    \le &C \bar \rho^{\alpha} \|\nabla u\|_{L^2}^2 + C \bar \rho^{\gamma-1} \|\rho- \bar \rho\|_{L^2} \|\dv u\|_{L^2} + C  \|\nabla d\|_{L^3} \|\nabla^2 d\|_{L^2} \|\dv u\|_{L^2},
\end{align*}
and 
\begin{equation}\label{k1}
    \begin{aligned}
        &\int_{0}^{T} \|\nabla u\|_{L^2}^{2} \|\nabla u\|_{L^6}  dt\\
        \le & C\int_{0}^{T} \|\nabla u\|_{L^2}^{2} \left(\bar\rho^{\frac{1}{2}-\alpha} \|\sqrt{\rho} u_t\|_{L^2}
        +\|\nabla u\|_{L^2} +\bar \rho^{-\alpha+\gamma-1}\|\nabla \rho\|_{L^2}  \right) dt\\
        & + C \bar\rho^{-\alpha} \int_{0}^{T} \|\nabla u\|_{L^2}^{2} \|\nabla d\|_{L^3}\|\nabla^3d\|_{L^2}  dt\\
        \le & C \int_{0}^{T} ( \|\nabla u\|_{L^2}^2 + \bar \rho^{-\alpha + \gamma -1+\frac{\beta}{2}} \|\nabla u\|_{L^2}^{2} + \bar \rho^{-\alpha} \|\nabla u\|_{L^2} \|\nabla^3d\|_{L^2} ) dt\\
        \le & C \left( \bar \rho^{1-\alpha}+ \bar \rho^{-\alpha + \gamma  + (1-\alpha)} + \bar \rho^{-\alpha+ \frac 12 (1-\alpha)}\right) \\
        \le & C \left(\bar \rho^{1-\alpha} + \bar \rho^{\gamma-2\alpha+1} \right),
    \end{aligned}
\end{equation}
due to $\beta<1$. 
Similarly,
\begin{equation}\label{k2}
    \begin{aligned}
        &\int_{0}^{T} \|\nabla u\|_{L^2}^{\frac{3}{2}} \|\nabla u\|_{L^6}^{\frac{3}{2}}  dt\\
        \le & C\int_{0}^{T} \|\nabla u\|_{L^2}^{\frac{3}{2}} \left(\bar\rho^{\frac{1}{2}-\alpha} \|\sqrt{\rho} u_t\|_{L^2} + \|\nabla u\|_{L^2} + \bar \rho^{-\alpha+\gamma-1}\|\nabla \rho\|_{L^2} \right)^{\frac{3}{2}} dt\\
        & + C\int_{0}^{T} \bar\rho^{-\frac{3\alpha}{2}} \|\nabla u\|_{L^2}^{\frac{3}{2}}  \|\nabla d\|_{L^3}^{\frac{3}{2}} \|\nabla^3d\|_{L^2}^{\frac{3}{2}} dt\\
        \le & C \int_{0}^{T} \left(\bar \rho^{\frac12 (\frac{1}{2}-\alpha)} \|\nabla u\|_{L^2}^{\frac{3}{2}} \|\sqrt{\rho} u_t\|_{L^2}^{\frac 12} + \|\nabla u\|_{L^2}^2 + \bar \rho^{-\frac32 \alpha +\frac{3}{2} \gamma -\frac 32+ \frac{\beta}{2} } \|\nabla u\|_{L^2}^{\frac{3}{2}} \|\nabla \rho\|_{L^q}^{\frac{1}{2}}  \right) dt\\
        & + C \int_{0}^{T}   \bar \rho^{-\frac{3}{2}\alpha} \|\nabla u\|_{L^2}^{\frac 12}\|\nabla^3d\|_{L^2}^{\frac{3}{2}} dt\\
        \le & C \left(\bar \rho^{\frac 12 (\frac{1}{2}-\alpha) + (1-\alpha) \frac{3}{4} + \frac 14 \alpha } + \bar \rho^{1-\alpha}+ \bar \rho^{-\frac32 \alpha +\frac{3}{2} \gamma -\frac 32+ \frac{\beta}{2} + (1-\alpha) \frac{3}{4} + \frac 14 (\alpha-\gamma+\beta)} + \bar \rho^{-\frac 32 \alpha + \frac 14 (1-\alpha)} \right) \\
        \le & C \left(\bar \rho^{1-\alpha} + \bar \rho^{-2 \alpha +\frac{5}{4} \gamma + \frac{3\beta}{4} -   \frac{3}{4} } \right)\\
        \le & \begin{cases}
            C \left(\bar \rho^{1-\alpha} + \bar \rho^{-2 \alpha +\frac{5}{4} \gamma -   \frac{3}{4} } \right),\quad \gamma>3\\
            C \left(\bar \rho^{1-\alpha} + \bar \rho^{-2\alpha+\frac 12 \gamma+\frac 32} \right),\quad 1<\gamma\le 3
        \end{cases}\\
        \le & \begin{cases}
            C \left(\bar \rho^{1-\alpha} + \bar \rho^{-\frac 3 4 \alpha +\frac{1}{2} } \right),\quad \gamma>3\\
            C \left(\bar \rho^{1-\alpha} + \bar \rho^{-2\alpha+\frac 12 \gamma+\frac 32} \right),\quad 1<\gamma\le 3
        \end{cases}\\
        \le & C \left(\bar \rho^{\frac{3}{4}(1-\alpha)} + \bar \rho^{-2\alpha+\frac 12 \gamma+\frac 32} \right),
    \end{aligned}
\end{equation}
where we have used \eqref{a1}.
Using the above estimates, and integrating \eqref{t1} with respect to $t$ over $[0, T]$, we can get from \eqref{basic-est} and \eqref{d2d} that
\begin{equation}
        \begin{aligned}
            & \frac{\bar \rho^\alpha}{2^{\alpha+1}}\sup_{0\le t\le T}\int \left(  \mu_1 |\nabla u|^2+ (\mu_1+\mu_2)(\dv u)^2\right)dx + \frac{1}{2} \int_{0}^{T}\|\sqrt{\rho} u_t\|_{L^2}^2 dt \\
            \le
            &\sup_{0\le t\le T}\int \left(  \mu_1 \rho^\alpha |\mathcal{D} u|^2+ \frac{\mu_2 \rho^\alpha}{2}(\dv u)^2\right)dx + \frac{1}{2} \int_{0}^{T}\|\sqrt{\rho} u_t\|_{L^2}^2 dt \\
            \le&  N_3 2^\alpha \bar \rho^\alpha   + C \sup_{0\le  t\le T}  \bar \rho^{\gamma-1} \|\rho-\bar \rho\|_{L^2}\|\dv u\|_{L^2}+ C  \sup_{0\le  t\le T} \|\nabla d\|_{L^3} \|\nabla^2 d\|_{L^2}   \|\dv u\|_{L^2} \\
            & + C \bar \rho^\alpha \int_0^{T}\|\nabla u\|_{L^2}^{\frac{3}{2}} \|\nabla u\|_{L^6}^{\frac{3}{2}} dt + C(\bar \rho + \bar{\rho}^{\alpha}) \int_{0}^{T} \|\nabla u\|_{L^2}^{2} \|\nabla u\|_{L^6} dt \\
            &+  C  (\bar \rho^{\gamma}+\bar \rho) \int_{0}^{T} \|\nabla u\|_{L^2}^{2} dt + C\int_{0}^{T} \|\nabla^3d\|_{L^2}^2 dt\\
            \le&  N_3 2^\alpha \bar \rho^\alpha   +  C (\bar \rho^{\frac{\gamma+1}{2}} +1 +\bar\rho+ \bar \rho^{\alpha+ \frac{3}{4}(1-\alpha)} + \bar \rho^{-\alpha+\frac 12 \gamma+\frac 32} + \bar \rho^{\gamma+1-\alpha})\\
            \le&  N_3 2^\alpha \bar \rho^\alpha   +  C_1 ( \bar \rho^{\frac{\gamma+1}{2}}  + \bar \rho^{\alpha+ \frac{3}{4}(1-\alpha)} + \bar \rho^{\gamma+1-\alpha})\\
            \le& N_3 2^{\alpha+1} \bar \rho^\alpha,
        \end{aligned}
    \end{equation}
    after defining 
    \begin{equation}\label{m1}
    N_3 \triangleq 2\mu_1 \|\mathcal{D}(u_0) \|_{L^2}^2+\mu_2 \|\mathrm{div}u_0\|_{L^2}^2, 
    \end{equation}
    and choosing $\Lambda_2$ such that 
    $$\bar \rho \ge \Lambda_2\triangleq \max\left\{1, \left(\frac{4C_1}{2^\alpha N_3}\right)^{\frac{2}{2\alpha-\gamma-1}}, \left(\frac{4C_1}{2^\alpha N_3}\right)^{\frac{4}{3(\alpha-1)}}, \left(\frac{4C_1}{2^\alpha N_3}\right)^{\frac{1}{2\alpha-\gamma-1}}\right\},$$
    as long as $\alpha>\max\left\{1,\frac{\gamma+1}{2}\right\}$.
\end{proof}

Next we deal with $\mathcal{E}_{u,2}$.
\begin{lemma}
    Under the assumption \eqref{a1} and 
    \begin{equation}
        \alpha>\gamma-1,
    \end{equation}
    then there exists a positive constant $\Lambda_3$ such that 
\begin{equation}
\mathcal{E}_{u,2}(T)\le 2N_4 \bar\rho^{2\alpha-1},
\end{equation}
provided $\bar \rho\ge \Lambda_3= \Lambda_3(\mu_1,\mu_2,\nu, \lambda, \alpha,a, \|\nabla \rho_0\|_{L^q}, \| u_0\|_{H^1}, \|\nabla^2 d_0\|_{L^2})$.
\end{lemma}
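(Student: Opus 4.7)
The plan is to run the standard higher-order energy estimate: differentiate the momentum equation in time, test against $u_t$, and carefully track the powers of $\bar\rho$ produced by the density-dependent viscosity and the director forcing. Writing \eqref{ins}$_2$ in the non-divergence form $\rho u_t+\rho u\cdot\nabla u+\nabla P=\dv\mathbb{T}-\nu\dv(\nabla d\odot\nabla d-\tfrac12|\nabla d|^2\mathbb{I}_3)$ and applying $\partial_t$, one multiplies by $u_t$ and uses the continuity equation $\rho_t=-\dv(\rho u)$ to rewrite
\[
\int\rho_t|u_t|^2\,dx+2\int\rho u_{tt}\cdot u_t\,dx=\frac{d}{dt}\int\rho|u_t|^2\,dx,
\]
so that after integration by parts the principal part gives
\[
\tfrac12\tfrac{d}{dt}\int\rho|u_t|^2\,dx+\mu_1\!\int\rho^\alpha|\nabla u_t|^2\,dx+(\mu_1+\mu_2)\!\int\rho^\alpha(\dv u_t)^2\,dx\le\text{RHS},
\]
with RHS coming from commutators $(\rho^\alpha)_t$, $P_t$, the convective terms, and $\partial_t$ of the director stress.

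The next step is to express every time derivative in terms of spatial quantities using \eqref{ins}$_1$ and $\partial_t d=-u\cdot\nabla d+\lambda(\Delta d+|\nabla d|^2d)$, and then bound each factor by H\"older, Gagliardo–Nirenberg, and the already-proved elliptic estimates \eqref{H2}–\eqref{W2p}, together with the director bounds \eqref{basic-est}, \eqref{d2d}, and the smallness $\|\nabla d\|_{L^3}\le2\delta$. The representative terms are of the form
\[
\int|\nabla\rho||u||\nabla u_t||u_t|,\qquad \int\rho^\alpha|\nabla u||\nabla u_t|^2,\qquad \int|\nabla d_t||\nabla d||\nabla u_t|,\qquad \int P_t\,\dv u_t,
\]
each absorbed by $\tfrac12\mu_1\bar\rho^\alpha\|\nabla u_t\|_{L^2}^2$ after applying Cauchy inequality, generating lower-order pieces proportional to controlled quantities like $\|\sqrt\rho u_t\|_{L^2}^2$, $\|\nabla u\|_{L^2}^2$, $\|\nabla^3 d\|_{L^2}^2$, and $\|\nabla\rho\|_{L^q}^2$.

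The time-integrated inequality is then treated by Gronwall after checking that every coefficient in front of $\|\sqrt\rho u_t\|_{L^2}^2$ is integrable in $t$; the previous energy bounds \eqref{basic-est-0}, \eqref{basic-est}, the bound on $\mathcal{E}_{u,1}$ from Lemma~\ref{L_2}, and the assumption $\mathcal{E}_{\rho,j}\le 2N_j\bar\rho^\beta$ guarantee this. The target scaling $\bar\rho^{2\alpha-1}$ is set by the initial value: using the momentum equation at $t=0$,
\[
\|\sqrt{\rho_0}u_t(0)\|_{L^2}^2\le C\bar\rho^{-1}\bigl(\|\dv\mathbb{T}(0)\|_{L^2}^2+\|\nabla P(\rho_0)\|_{L^2}^2+\|\rho_0 u_0\cdot\nabla u_0\|_{L^2}^2+\nu^2\|\nabla d_0\cdot\Delta d_0\|_{L^2}^2\bigr)\le N_4\bar\rho^{2\alpha-1},
\]
which fixes the definition of $N_4=N_4(\mu_1,\mu_2,\nu,\lambda,\alpha,a,\|\nabla\rho_0\|_{L^q},\|u_0\|_{H^2},\|\nabla^2 d_0\|_{L^2})$. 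Finally, one chooses $\Lambda_3$ so large that all the error terms, which carry negative powers such as $\bar\rho^{1-\alpha}$, $\bar\rho^{\gamma-\alpha}$, and $\bar\rho^{\gamma-2\alpha+\beta}$ (all negative under the hypothesis $\alpha>\gamma-1$ and $\alpha>\tfrac{\gamma+1}{2}$, $\beta<2$), are dominated by the main term $2N_4\bar\rho^{2\alpha-1}$.

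The main obstacle is the coupling with the director: in the time-differentiated equation one encounters $\partial_t(\nabla d\odot\nabla d)$, which expands to $\nabla d_t\cdot\nabla d$, and via the director equation $\nabla d_t$ contains $\nabla(u\cdot\nabla d)$ and $\nabla\Delta d$. The first piece must be paired with $\nabla u_t$ so that, after Gagliardo–Nirenberg interpolation between $\|\nabla d\|_{L^3}$ and $\|\nabla^3 d\|_{L^2}$ (already controlled by Lemma~\ref{3d}), the smallness $\delta$ absorbs the coefficient in front of $\|\nabla u_t\|_{L^2}^2$; the $\nabla\Delta d$ piece is integrable in time thanks to \eqref{d2d}. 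Tracking the $\bar\rho$ exponents through these director estimates is where the hypothesis $\alpha>\gamma-1$ is exactly used to ensure the residual terms have strictly negative power of $\bar\rho$ in excess of $\bar\rho^{2\alpha-1}$, closing the bootstrap.
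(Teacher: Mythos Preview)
Your overall framework—differentiate the momentum equation in time, test with $u_t$, track the $\bar\rho$ powers, close by Gronwall, and let the initial value $\|\sqrt{\rho_0}u_{t0}\|_{L^2}^2$ fix the scale $\bar\rho^{2\alpha-1}$—matches the paper. The gap is in your treatment of the director coupling term $I_6=\nu\int\nabla u_t:\partial_t(\nabla d\odot\nabla d)\,dx$.

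You propose to substitute $\nabla d_t=-\nabla(u\cdot\nabla d)+\lambda\nabla\Delta d+\lambda\nabla(|\nabla d|^2 d)$ directly and assert that the $\nabla\Delta d$ piece is time-integrable thanks to \eqref{d2d}. But the resulting integral
\[
\int\nabla u_t\cdot\nabla d\cdot\nabla\Delta d\,dx\ \le\ \|\nabla u_t\|_{L^2}\,\|\nabla d\|_{L^\infty}\,\|\nabla^3 d\|_{L^2}
\]
forces, after Young's inequality, a remainder $C\bar\rho^{-\alpha}\|\nabla d\|_{L^\infty}^2\|\nabla^3 d\|_{L^2}^2$. The only available control on $\|\nabla d\|_{L^\infty}$ at this stage interpolates against $\|\nabla^3 d\|_{L^2}$ (e.g.\ $\|\nabla d\|_{L^\infty}\le C\|\nabla^2 d\|_{L^2}^{1/2}\|\nabla^3 d\|_{L^2}^{1/2}$), so you are left with at least $\int_0^T\|\nabla^3 d\|_{L^2}^3\,dt$, and Lemma~\ref{3d} only gives $\int_0^T\|\nabla^3 d\|_{L^2}^2\,dt\le C$ with no pointwise-in-time bound on $\|\nabla^3 d\|_{L^2}$ among the bootstrap hypotheses \eqref{a1}. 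The alternative H\"older split $\|\nabla d\|_{L^3}\|\nabla^3 d\|_{L^6}$ would require $\|\nabla^4 d\|_{L^2}$, also unavailable. Hence the substitution route does not close.

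What the paper does instead is keep $\nabla d_t$ intact, estimate
\[
I_6\ \le\ \tfrac{1}{16}\bar\rho^{\alpha}\|\nabla u_t\|_{L^2}^2+C\bar\rho^{-\alpha}\|\nabla d\|_{L^3}^2\|\nabla^2 d_t\|_{L^2}^2,
\]
and then derive a \emph{companion} energy identity for $\|\nabla d_t\|_{L^2}^2$ by applying $\partial_t$ to \eqref{ins}$_3$ and testing with $-\Delta d_t$. That identity supplies $\lambda\|\nabla^2 d_t\|_{L^2}^2$ as dissipation, at the price of a new cross term $C\|\nabla d\|_{L^3}^2\|\nabla u_t\|_{L^2}^2$ on its right-hand side. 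The two differential inequalities are then \emph{added}; the cross terms carry a factor $\|\nabla d\|_{L^3}^2\le 4\delta^2$ and are absorbed into the respective dissipations once $\bar\rho$ is large, after which Gronwall is applied to the combined quantity $\|\sqrt{\rho}u_t\|_{L^2}^2+\|\nabla d_t\|_{L^2}^2$. The extra initial datum $\|\nabla d_{t0}\|_{L^2}^2$ is $O(1)$ and is folded into $N_4$, so the final scaling is still governed by $\|\sqrt{\rho_0}u_{t0}\|_{L^2}^2\le C\bar\rho^{2\alpha-1}$ exactly as you computed. Your plan becomes correct once you add this coupled $d_t$ estimate rather than substituting for $\nabla d_t$.
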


\begin{proof}
Multiply the resulting equation by $u_t$, and after integrating by parts, we have that 
\begin{equation}\label{k2}
    \begin{aligned}
        &\frac{1}{2} \frac{d}{dt} \int \rho |u_t|^2 dx + \int  \left(  2 \mu_1 \rho^\alpha |\mathcal{D} u_t|^2+ \mu_2 \rho^\alpha (\dv u_t)^2\right)dx\\
            =& \int  \dv(\rho u) |u_t|^2 dx -\int  \rho u_t \cdot \nabla u \cdot u_t dx + \int  \dv(\rho u) u \cdot \nabla u \cdot u_t dx  \\
            &+ \int \left( 2 \mu_1 (\rho^\alpha)_t \mathcal{D}u: \mathcal{D}u_t+ \mu_2 (\rho^\alpha)_t \dv u \dv u_t \right)dx- \int P_t \dv u_t dx \\
            &+  \nu \int (2\mathcal{D}_{ij}(u_t)\partial_id\cdot\partial_jd_t + \nabla d : \nabla d_t \dv u_t )dx\\
            =&\sum_{i=1}^{6} I_i.
    \end{aligned}
\end{equation}

It follows from H\"older and Sobolev inequalities that
\begin{equation}
    \begin{aligned}
        I_1+I_2\le& C \int  (\bar\rho| \nabla u| +| \nabla \rho \cdot u|) |u_t|^2 dx  \\
            \le &C \left( \bar \rho  \|\nabla u\|_{L^2} +  \|\nabla \rho\|_{L^q} \|u \|_{L^{\frac{2q}{q-2}}}  \right) \| u_t \|_{L^4}^{2} \\
            \le &C \left( \bar \rho^{\frac34}  \|\nabla u\|_{L^2} +  \bar \rho^{-\frac14} \|\nabla \rho\|_{L^q} \|u \|_{L^{2}}^{\frac{q-3}{q}} \|\nabla u \|_{L^{2}}^{\frac{3}{q}} \right) \|\sqrt{\rho} u_t \|_{L^2}^{\frac12} \|\nabla u_t \|_{L^2}^{\frac32} \\
            \le & \frac{\mu}{2^{\alpha+3}} \bar \rho^{\alpha} \|\nabla u_t\|_{L^2}^2 + C\left(\bar \rho^{3-3\alpha} \|\nabla u\|_{L^2}^4  +   \bar \rho^{-3\alpha-1} \|\nabla \rho\|_{L^q}^4  \|\nabla u \|_{L^{2}}^{\frac{12}{q}}  \right) \|\sqrt{\rho} u_t \|_{L^2}^2 \\
            \le & \frac{\mu}{2^{\alpha+3}} \bar \rho^{\alpha} \|\nabla u_t\|_{L^2}^2 + C\left(\bar \rho^{3-3\alpha} \|\nabla u\|_{L^2}^2  +   \bar \rho^{-3\alpha-1+2b}   \|\nabla u \|_{L^{2}}^{2}  \right) \|\sqrt{\rho} u_t \|_{L^2}^2\\
            \le & \frac{\mu}{2^{\alpha+3}} \bar \rho^{\alpha} \|\nabla u_t\|_{L^2}^2 + C \bar \rho^{2-\alpha} \|\nabla u\|_{L^2}^2, 
    \end{aligned}
\end{equation}
also $I_3$ can be estimated as follows
\begin{equation}
    \begin{aligned}
        I_3\le & \int  |\rho \dv u+ \nabla \rho \cdot u| |u| | \nabla u| |u_t| dx\\
            \le & C \left(\bar \rho \|u\|_{L^6} \|\nabla u\|_{L^3}^2  \| u_t\|_{L^6} + \|\nabla \rho\|_{L^q}  \|u\|_{L^6}^2  \|\nabla u\|_{L^{\frac{2q}{q-2}}} \|u_t\|_{L^6} \right)\\
            \le & \frac{\mu}{2^{\alpha+3}} \bar \rho^{\alpha} \|\nabla u_t\|_{L^2}^2 + C \bar \rho^{2-\alpha} \|\nabla u\|_{L^2}^4 \|\nabla u\|_{L^6}^2 + C \bar \rho^{ - \alpha} \|\nabla \rho \|_{L^q}^2  \|\nabla u\|_{L^2}^\frac{6(q-1)}{q}\|\nabla u\|_{L^6}^\frac{6}{q}.
    \end{aligned}
\end{equation}
Similarly, we can estimate $I_4$ as follows:
    \begin{equation}
        \begin{aligned}
            I_4\le & C  \bar \rho^{\alpha-1}\int |\rho \dv u+ \nabla \rho \cdot u| |\nabla  u||\nabla  u_t| dx\\
            \le & C \left(\bar \rho^{\alpha} \|\nabla u\|_{L^4}^2 \|\nabla  u_t\|_{L^2} + \bar \rho^{\alpha-1}  \|\nabla \rho\|_{L^q}  \|u\|_{L^\frac{3q}{q-3}}  \|\nabla u\|_{L^6} \|\nabla u_t\|_{L^2} \right)\\
            \le & \frac{\mu}{2^{\alpha+3}} \bar \rho^{\alpha} \|\nabla u_t\|_{L^2}^2 + C \bar \rho^{\alpha} \|\nabla u\|_{L^2} \|\nabla u\|_{L^6}^3 + C \bar \rho^{\alpha-2} \|\nabla \rho \|_{L^q}^2  \|\nabla u\|_{L^2}^{\frac{3(q-2)}{q}} \|\nabla u\|_{L^6}^{2+\frac{6-q}{q}},
        \end{aligned}
    \end{equation}
and $I_5$ is bounded by
    \begin{equation}
        \begin{aligned}
            I_5\le& C  \bar \rho^{\gamma-1}\int |\rho \dv u+ \nabla \rho \cdot u| |\dv u_t| dx \\
            \le & C \left(\bar \rho^{\gamma} \|\dv u\|_{L^2} \|\dv u_t\|_{L^2} + \bar \rho^{\gamma-1}  \|\nabla \rho\|_{L^q}  \|u\|_{L^{\frac{2q}{q-2}}} \|\dv u_t\|_{L^2} \right)\\
            \le & \frac{\mu}{2^{\alpha+3}} \bar \rho^{\alpha} \|\nabla u_t\|_{L^2}^2 + C \bar \rho^{2\gamma-\alpha} \|\nabla u\|_{L^2}^2 + C \bar \rho^{2\gamma-2-\alpha} \|\nabla \rho \|_{L^q}^2 \|u\|_{L^2}^{\frac{2(q-3)}{q}} \|\nabla u\|_{L^2}^{\frac{6}{q}}\\
            \le & \frac{\mu}{2^{\alpha+3}} \bar \rho^{\alpha} \|\nabla u_t\|_{L^2}^2 + C \bar \rho^{2\gamma-\alpha} \|\nabla u\|_{L^2}^2 + C \bar \rho^{2\gamma-2-\alpha}  \|\nabla \rho\|_{L^q}^{2}.
        \end{aligned}
    \end{equation}
    Then,
\begin{align}\label{J5}
    I_6&\leq C\int |\nabla u_t| |\nabla d||\nabla  d_t| dx
    \leq C\|\nabla u_t\|_{L^2}\|\nabla d\|_{L^3}\|\nabla d_t\|_{L^6}\nonumber\\
    &\leq \frac{1}{16}\bar \rho^{\alpha} \|\nabla u_t\|_{L^2}^2  
    +C\bar\rho^{-\alpha}\|\nabla d\|_{L^3}^2\| \nabla^2 d_t\|_{L^2}^2.
\end{align}
Collecting all above estimates, we have 
\begin{equation}
    \begin{aligned}
        & \frac{d}{dt} \int \rho |u_t|^2 dx + \mu\bar \rho^\alpha \int    |\nabla u_t|^2dx\\
            \le & C \bar \rho^{2-\alpha} \|\nabla u\|_{L^2}^2 + C \bar \rho^{2-\alpha} \|\nabla u\|_{L^2}^4 \|\nabla u\|_{L^6}^2 + C \bar \rho^{ - \alpha} \|\nabla \rho \|_{L^q}^2  \|\nabla u\|_{L^2}^\frac{6(q-1)}{q}\|\nabla u\|_{L^6}^\frac{6}{q}\\
            & + C \bar \rho^{\alpha} \|\nabla u\|_{L^2} \|\nabla u\|_{L^6}^3 + C \bar \rho^{\alpha-2} \|\nabla \rho \|_{L^q}^2  \|\nabla u\|_{L^2}^{\frac{3(q-2)}{q}} \|\nabla u\|_{L^6}^{2+\frac{6-q}{q}} \\
            &+ C \bar \rho^{2\gamma-\alpha} \|\nabla u\|_{L^2}^2 + C \bar \rho^{2\gamma-2-\alpha}  \|\nabla \rho\|_{L^q}^{2}+ C\bar\rho^{-\alpha}\|\nabla d\|_{L^3}^2\| \nabla^2 d_t\|_{L^2}^2\\
            \le & C (\bar \rho^{2-\alpha} + \bar \rho^{2\gamma-\alpha} ) \|\nabla u\|_{L^2}^2 + C\bar\rho^{-\alpha}\|\nabla d\|_{L^3}^2\| \nabla^2 d_t\|_{L^2}^2\\
            & + C \bar \rho^{\alpha} \|\nabla u\|_{L^2} \|\nabla u\|_{L^6}^3 + C \bar \rho^{\alpha}  \|\nabla u\|_{L^2}^{3} \|\nabla u\|_{L^6},
    \end{aligned}
\end{equation}
due to $q\in(3,6) $.

Operating $\partial_t$ to $\eqref{ins}_3$, and multiplying it by $-\Delta d_t$, by integrating the resulting equality
by parts, we get
\begin{equation}\label{ddt}
    \begin{aligned}
    &\frac{1}{2}\frac{d}{dt}\|\nabla d_t\|_{L^2}^2
    +\lambda \|\nabla^2 d_t\|_{L^2}^2\\
    &=\int u_t\cdot\nabla d\cdot \Delta d_t dx -\lambda \int(|\nabla d|^2d)_t\cdot \Delta d_t dx\\
    &\quad-\int \nabla u:\nabla d_t\odot\nabla d_t dx -\frac 12\int \dv u |\nabla d_t|^2 dx\\
    &\leq C (\|u_t\|_{L^6}\|\nabla d\|_{L^3}+\|\nabla d_t\|_{L^6}\|\nabla d\|_{L^3}+\|\nabla d\|_{L^6}^2\|d_t\|_{L^6})\|\Delta d_t\|_{L^2}\\
    &\quad+C\|\nabla u\|_{L^2}\|\nabla d_t\|_{L^4}^2\\
    &\leq C (\|\nabla u_t\|_{L^2}\|\nabla d\|_{L^3}+\|\nabla^2 d_t\|_{L^2}\|\nabla d\|_{L^3}+\|\nabla^2 d\|_{L^2}^2\|\nabla d_t\|_{L^2})\|\Delta d_t\|_{L^2}
    \\
    &\quad +C\|\nabla u\|_{L^2}\|\nabla d_t\|_{L^2}^{\frac{1}{2}}\|\nabla^2d_t\|_{L^2}^\frac{3}{2}\\
    &\leq \frac{\lambda }{4}\| \nabla^2 d_{t}\|_{L^2}^2+C\|\nabla d\|_{L^3}^2\|\nabla u_t\|_{L^2}^2
    +c_4 \lambda \|\nabla d\|_{L^3}^2\|\nabla^2 d_t\|_{L^2}^2 \\
    &\quad+C(\|\nabla u\|_{L^2}^4+\|\nabla^2 d\|_{L^2}^4)\|\nabla d_t\|_{L^2}^2,
\end{aligned}
\end{equation}
where we have used that 
\begin{align*}
    \int u\cdot\nabla d_t\cdot\Delta d_t dx&=-\int\partial_j u_i\partial_id_t\cdot\partial_j d_t dx-\int (u\cdot \nabla) \partial_j d_t\cdot \partial_j d_t dx\\
    &=-\int \nabla u:\nabla d_t\odot\nabla d_t dx-\frac 12\int \dv u |\nabla d_t|^2 dx.
\end{align*}
Choosing 
$$\delta<\frac{1}{4c_4},$$ 
we have
\begin{equation}\label{d_t-2nd}
    \begin{aligned}
      \frac{d}{dt}\|\nabla d_t\|_{L^2}^2
    +\|\nabla^2 d_{t}\|_{L^2}^2\leq &C\|\nabla d\|_{L^3}^2\|\nabla u_t\|_{L^2}^2+C(\|\nabla u\|_{L^2}^4+\|\nabla^2 d\|_{L^2}^4)\|\nabla d_t\|_{L^2}^2.   
    \end{aligned}
\end{equation}

Combining all the above estimates \eqref{k2}–\eqref{J5}, \eqref{d_t-2nd}
\eqref{a1} and \eqref{basic-est}, we deduce
\begin{equation}\label{k33}
    \begin{aligned}
        &\frac{d}{dt} \int \left(\rho |u_t|^2+ \vert\nabla d_t\vert^2\right) dx 
        +  \int\left(\bar\rho^\alpha|\nabla u_t|^2 + |\nabla^2 d_t|^2\right)dx\\
        \leq &C (\bar \rho^{2-\alpha} + \bar \rho^{2\gamma-\alpha} ) \|\nabla u\|_{L^2}^2 + C\bar\rho^{-\alpha} \|\nabla d\|_{L^3}^2 (\| \nabla^2 d_t\|_{L^2}^2 +\bar \rho^{\alpha}\|\nabla u_t\|_{L^2}^2 )\\
            & + C \bar \rho^{\alpha} \|\nabla u\|_{L^2} \|\nabla u\|_{L^6}^3 + C \bar \rho^{\alpha}  \|\nabla u\|_{L^2}^{3} \|\nabla u\|_{L^6}\\
        & +C(\|\nabla u\|_{L^2}^4+\|\nabla^2 d\|_{L^2}^4)\|\nabla d_t\|_{L^2}^2\\
        \leq &C (\bar \rho^{2-\alpha} + \bar \rho^{2\gamma-\alpha} ) \|\nabla u\|_{L^2}^2 + c_5 \bar\rho^{-\alpha}  (\| \nabla^2 d_t\|_{L^2}^2 +\bar \rho^{\alpha}\|\nabla u_t\|_{L^2}^2 )\\
            & + C \bar \rho^{\alpha} \|\nabla u\|_{L^2} \|\nabla u\|_{L^6}^3 + C \bar \rho^{\alpha}  \|\nabla u\|_{L^2}^{3} \|\nabla u\|_{L^6}\\
        & +C(\|\nabla u\|_{L^2}^4+\|\nabla^2 d\|_{L^2}^4)\|\nabla d_t\|_{L^2}^2  ,
    \end{aligned}
\end{equation}
due to $q\in (3,6)$, \eqref{a1}.

Therefore, choosing 
$$\bar \rho > (4c_{5})^\frac{1}{{\alpha}},$$
Gronwall's inequality yields
\begin{equation}
    \begin{aligned}
        &\sup_{0\le t\le T}  \left(\|\sqrt{\rho} u_t \|_{L^2}^2
        + \|\nabla d_t\|_{L^2}^2\right) 
        +\int_{0}^{T} \left(\bar\rho^\alpha\|\nabla u_t\|^2_{L^2}
        + \|\Delta d_t\|_{L^2}^2\right) dt\\
        \leq & \bigg\{C\int_0^T ( \bar \rho^{2-\alpha} + \bar \rho^{2\gamma-\alpha}) 
        \|\nabla u\|_{L^2}^2  dt \\
        &+C  \bar \rho^{\alpha} \int_0^T (\|\nabla u\|_{L^2} \|\nabla u\|_{L^6}^3 +  \|\nabla u\|_{L^2}^{3} \|\nabla u\|_{L^6} ) dt \bigg\} \\
        &\cdot\exp{\left\{ C \int_0^T \|\nabla u\|_{L^2}^2 +\|\nabla^2 d\|_{L^2}^4  dt\right\}}.
    \end{aligned}
\end{equation}

Taking advantage of \eqref{a1}, \eqref{basic-est}, and \eqref{e1}, we obtain
\begin{equation}\label{k3}
    \begin{aligned}
        &\int_{0}^{T} \|\nabla u\|_{L^2} \|\nabla u\|_{L^6}^{3}  dt\\
        \le & C\int_{0}^{T} \|\nabla u\|_{L^2} \left(\bar\rho^{\frac{1}{2}-\alpha} \|\sqrt{\rho} u_t\|_{L^2} +\|\nabla u\|_{L^2} + \bar \rho^{-\alpha+\gamma-1}\|\nabla \rho\|_{L^2}  \right)^{3} dt\\
        & + C\int_{0}^{T} \|\nabla u\|_{L^2} \left(\bar\rho^{-\alpha}\|\nabla^2 d\|_{L^2}^{\frac 32}\|\nabla^3 d\|_{L^2}^{\frac 12} \right)^{3} dt\\
        \le & C \int_{0}^{T} \left(\bar \rho^{1-2\alpha} \|\sqrt{\rho} u_t\|_{L^2}^2 + \|\nabla u\|_{L^2}^2 + \bar \rho^{-3\alpha+3\gamma-3 +\beta } \|\nabla u\|_{L^2} \|\nabla \rho\|_{L^2}  \right) dt\\
        & + C \int_{0}^{T}  \bar \rho^{-3\alpha} \|\nabla u\|_{L^2}^{\frac 12}\|\nabla^3d\|_{L^2}^{\frac{3}{2}}  dt\\
        \le & C \left(\bar \rho^{1-\alpha}+ \bar \rho^{-3\alpha+3\gamma-3 +\beta  + (1-\alpha) \frac{1}{2} + \frac 12 (\alpha-\gamma + \beta)} + \bar \rho^{-3\alpha+ \frac 14(1-\alpha)}\right) \\
        \le & C \left(\bar \rho^{1-\alpha}+ \bar \rho^{-3\alpha + \frac 32 \beta  + \frac 52 (\gamma-1 )} + \bar \rho^{-3\alpha+ \frac 14(1-\alpha)}\right) \\
        \le & 
        \begin{cases}
            C (\bar \rho^{1-\alpha} + \bar \rho^{-3\alpha+\frac 52 (\gamma-1)  }  ), \quad \gamma>3,\\
            C (\bar \rho^{1-\alpha}  + \bar \rho^{-3\alpha + 2+\gamma} ), \quad 1<\gamma\le 3,
        \end{cases}\\
        \le &  
        \begin{cases}
            C (\bar \rho^{1-\alpha} + \bar \rho^{-\frac 12 \alpha-5  }  ), \quad \gamma>3,\\
            C (\bar \rho^{1-\alpha}  + \bar \rho^{-3\alpha + 2+\gamma} ), \quad 1<\gamma\le 3,
        \end{cases}\\
        \le &  C (\bar \rho^{\frac12(1-\alpha)}  + \bar \rho^{-3\alpha + 2+\gamma} ),
    \end{aligned}
\end{equation}
due to $\gamma\le \alpha-1$.
Substitute \eqref{k1} and \eqref{k3} into \eqref{k33}, one gets
\begin{equation}\label{trut1}
\begin{aligned}
    &\sup_{0\le t\le T}  \left(\|\sqrt{\rho} u_t \|_{L^2}^2
    + \|\nabla d_t\|_{L^2}^2\right) 
    +\int_{0}^{T} \left(\bar\rho^\alpha\|\nabla u_t\|^2_{L^2}
    + \|\Delta d_t\|_{L^2}^2\right) dt\\
    &\leq \left(\|\sqrt{\rho} u_t \|_{L^2}^2+ \|\nabla d_t\|_{L^2}^2\right)|_{t=0} \\
    &\quad+  C(\bar\rho^{3-2\alpha}+\bar\rho^{2\gamma-2\alpha+1} + \bar \rho + \bar \rho^{\gamma-\alpha+1} + \bar \rho^{\frac12(1+\alpha)}  + \bar \rho^{-2\alpha + 2+\gamma} )\cdot
    \exp{\{C\bar\rho^{1-\alpha}+\tilde{C}\}}.
\end{aligned}
\end{equation}

Define 
    \begin{align*}
        \rho_0^{1/2} u_{t0} \triangleq \rho_0^{-1/2}\left(-\rho_0 u_0\cdot \nabla u_0 - \nabla P(\rho_0) +2\mu \dv( \rho_0^{\alpha}\mathcal{D} u_0)+\lambda\nabla( \rho_0^{\alpha} \dv u_0)\right)\\
        -\nu \rho_0^{-1/2} \left(\mathrm{div}(\nabla d_0\odot\nabla d_0)-\frac{1}{2} |\nabla d_0|^2 \mathbb{I}_3)\right),
    \end{align*}
    and 
    \begin{equation*}
    \nabla d_{t0}\triangleq \lambda \nabla\Delta d_0-\nabla(u_0\cdot\nabla d_0)+ \lambda \nabla(\vert\nabla d_0\vert^2 d_0),
    \end{equation*}
    then it follows from \eqref{ia} that 
    \begin{equation}\label{ki}
    \begin{aligned}
        \|\sqrt{\rho_0} u_{t0}\|_{L^2} \le& C 
        \left(\bar \rho^{\frac12} \|\nabla u_0\|_{L^2}^{\frac{3}{2}} \|\nabla u_0\|_{L^6}^{\frac{1}{2}} + \bar \rho^{-\frac{1}{2}}\|\nabla P(\rho_0)\|_{L^2} + \bar \rho^{\alpha-\frac{1}{2}}\|\nabla^2 u_0\|_{L^2} \right) \\
        &+ C \left( \bar \rho^{\alpha-\frac{3}{2}} \|\nabla \rho_0\|_{L^q} \|\nabla u_0\|_{L^{\frac{2q}{q-2}}} + C\bar \rho^{-\frac12} \|\nabla d_0\|_{L^6}\|\nabla^2 d_0\|_{L^3} \right)\\
        \le& C  \left(\bar \rho^{\gamma-\frac{3}{2}} \|\nabla \rho_0\|_{L^2}+ \bar \rho^{\alpha-\frac{1}{2}}\right)\\
        \le & C\left(\bar \rho^{\gamma-\frac 32}+ \bar \rho^{\alpha-\frac{1}{2}}\right)\\
        \le & C_2 \bar \rho^{\alpha-\frac{1}{2}},
    \end{aligned}
    \end{equation}
    due to 
    $\gamma-1\le \alpha$, and 
    \begin{equation}
    \begin{aligned}
        \|\nabla d_{t0}\|_{L^2} \le& C 
        \left( \| \nabla \Delta  d_0\|_{L^2} + \|\nabla u_0\|_{L^6} \|\nabla d_0\|_{L^3} + \| u_0\|_{L^6} \|\nabla^2 d_0\|_{L^3}\right) \\
        &+ C \left( \|\nabla d_0\|_{L^6}^3 +  \|\nabla^2 d_0\|_{L^3} \|\nabla d_0\|_{L^{6}}  \right)\\
        \le & C_3.
    \end{aligned}
    \end{equation}
    By combining \eqref{k3} with \eqref{ki}, we obtain
    \begin{equation}
        \begin{aligned}
            &\sup_{0\le t\le T}  \|\sqrt{\rho} u_t \|_{L^2}^2 +   \mu \frac{\bar \rho^\alpha}{2^{\alpha+1}} \int_{0}^{T} \|\nabla u_t\|^2_{L^2} dt\\
            \le &  N_4 \bar \rho^{2\alpha-1}  +  C_4 \left( \bar \rho^{\alpha} + \bar \rho^{2\gamma-2\alpha +1} + \bar \rho^{\gamma} \right) \\
            \le & 2 N_4 \bar \rho^{2\alpha-1},  
        \end{aligned}
    \end{equation}
    after defining
    \begin{equation}\label{m2}
       N_4=N_4(a,\mu_1,\mu_2,\lambda, \alpha, \gamma, \|\rho_0-\bar \rho\|_{L^\gamma}, \|\nabla \rho_0\|_{L^q}, \|\nabla u_0\|_{H^1}, \|\nabla d_0\|_{H^2}) \triangleq C_2^2 + C_3^2, 
    \end{equation}
    and choosing 
    $$\bar \rho \ge \Lambda_3 \triangleq \max\left\{\Lambda_2, \left(\frac{3C_4}{ N_4}\right)^{\frac{1}{\alpha-1}}, \left(\frac{3C_4}{ N_4}\right)^{\frac{1}{2(2\alpha-\gamma-1)}}, \left(\frac{3C_4}{ N_4}\right)^{\frac{1}{2\alpha-\gamma-1}}, (4c_{5})^\frac{1}{{\alpha}} \right\},$$
    as long as $\alpha>\max\left\{1,\frac{\gamma+1}{2}\right\}$.
\end{proof}
Finally, we are able to finish the bound of  $\mathcal{E}_\rho$.
\begin{lemma}\label{L_1}
	There exists a positive constant $\Lambda_4$ such that 
	\begin{equation}
		\mathcal{E}_{\rho,2}(T) \le N_1 \bar\rho^{\beta},
        ~\mathcal{E}_{\rho,3}(T) \le N_2 \bar\rho^{\beta},
	\end{equation}
	provided $\bar\rho>\Lambda_4=\Lambda_4(\mu_1,\mu_2,\nu,\lambda, \alpha,a,\|\nabla\rho_0\|_{L^q}, \| u_0\|_{H^2}, \|\nabla d_0\|_{H^1})$.
    As a consequence, 
    \begin{equation}
		\mathcal{E}_{\rho,1}(T) \le \frac{\bar \rho}{4}.
	\end{equation}
\end{lemma}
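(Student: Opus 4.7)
The plan is to derive transport-type $L^p$ estimates for $\nabla\rho$ with $p\in\{2,q\}$, extracting a density-dissipation term from the pressure through the effective viscous flux. Applying $\nabla$ to the mass equation \eqref{ins}$_1$ gives
\begin{equation*}
\partial_t \nabla\rho + u\cdot\nabla(\nabla\rho) + \nabla u\cdot\nabla\rho + \nabla\rho\,\dv u + \rho\,\nabla\dv u = 0.
\end{equation*}
Multiplying by $|\nabla\rho|^{p-2}\nabla\rho$, integrating by parts, and using the identity $\int u\cdot\nabla|\nabla\rho|^p\,dx = -\int \dv u\,|\nabla\rho|^p\,dx$, I obtain
\begin{equation*}
\tfrac{1}{p}\tfrac{d}{dt}\|\nabla\rho\|_{L^p}^p \le C\|\nabla u\|_{L^\infty}\|\nabla\rho\|_{L^p}^p + \int \rho\,\nabla\dv u\cdot |\nabla\rho|^{p-2}\nabla\rho\,dx.
\end{equation*}
The key observation is that from \eqref{flux} one has $(2\mu_1+\mu_2)\nabla\dv u = \nabla F - a\gamma\rho^{\gamma-\alpha-1}\nabla\rho$, so the pressure piece yields a dissipative term
\begin{equation*}
-\frac{a\gamma}{2\mu_1+\mu_2}\int \rho^{\gamma-\alpha}|\nabla\rho|^p\,dx \le -c\,\bar\rho^{\gamma-\alpha}\|\nabla\rho\|_{L^p}^p,
\end{equation*}
after invoking $\rho\sim \bar\rho$ from \eqref{a1}. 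This is exactly the factor $\bar\rho^{\gamma-\alpha}$ appearing in the definitions of $\mathcal{E}_{\rho,2}$ and $\mathcal{E}_{\rho,3}$.

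Next, I control the remaining source $\int\rho\nabla F\cdot|\nabla\rho|^{p-2}\nabla\rho\,dx$ using the elliptic regularity $\|\nabla F\|_{L^p}\le C\|H\|_{L^p}$ established in \eqref{e2}, \eqref{e3}, which bounds it by quantities involving $\|\sqrt{\rho}u_t\|_{L^2}$ (for $p=2$) or $\|\rho u_t\|_{L^q}$ (for $p=q$), together with $\|\nabla u\|_{L^p}$, $\|\nabla\rho\|_{L^q}$, and the director contribution $\|\nabla d\|_{L^3}\|\nabla^3 d\|_{L^2}$. The term $\|\nabla u\|_{L^\infty}\|\nabla\rho\|_{L^p}^p$ is absorbed via \eqref{W2p} after interpolation. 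Young's inequality then lets the $\bar\rho^{\gamma-\alpha}\|\nabla\rho\|_{L^p}^p$ dissipation swallow cross terms, while the already-closed bounds from Lemma \ref{L_2} and its successor provide $\int_0^T\|\sqrt\rho u_t\|_{L^2}^2\,dt\lesssim \bar\rho^\alpha$, $\int_0^T\bar\rho^\alpha\|\nabla u_t\|_{L^2}^2\,dt\lesssim \bar\rho^{2\alpha-1}$, together with \eqref{basic-est-0} and \eqref{d2d}. A Gronwall argument with the time-integrated dissipation then produces
\begin{equation*}
\sup_{[0,T]}\|\nabla\rho\|_{L^p}^2 + \bar\rho^{\gamma-\alpha}\int_0^T\|\nabla\rho\|_{L^p}^2\,dt \le N(p)\,\bar\rho^\beta,
\end{equation*}
closing both $\mathcal{E}_{\rho,2}$ and $\mathcal{E}_{\rho,3}$ with $N_1, N_2$ defined in terms of $\|\nabla\rho_0\|_{L^p}$ and the structural constants.

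For the $L^\infty$ bound, I work along particle trajectories $X(t)$: since $\rho_t + u\cdot\nabla\rho = -\rho\dv u$, one has $\rho(t,X(t)) = \rho_0(x)\exp\bigl(-\int_0^t \dv u(s,X(s))\,ds\bigr)$, so it suffices to show $\int_0^T \|\dv u\|_{L^\infty}\,dt$ is small. Using the flux identity $\dv u = (2\mu_1+\mu_2)^{-1}(F - \mathcal{P}(\rho) + \mathcal{P}(\bar\rho))$, Gagliardo-Nirenberg to bound $\|F\|_{L^\infty}$ through $\|\nabla F\|_{L^2}\cap\|\nabla F\|_{L^q}$, and the already-established bounds \eqref{e2}, \eqref{e3}, this integral is $O(\bar\rho^{-\kappa})$ for some $\kappa>0$ under the hypotheses $\alpha>1$, $\alpha>(\gamma+1)/2$; combined with $|\rho_0-\bar\rho|\le\bar\rho/4$ from \eqref{ia}, taking $\bar\rho\ge\Lambda_4$ large then forces $\mathcal{E}_{\rho,1}(T)\le\bar\rho/4$.

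The main obstacle is the tight power-counting in $\bar\rho$: each source term produced by $\nabla F$ carries a specific power of $\bar\rho$, and closing the estimate requires every such contribution, after integration in time using $\mathcal{E}_{u,1}\lesssim \bar\rho^\alpha$ and $\mathcal{E}_{u,2}\lesssim \bar\rho^{2\alpha-1}$, to sit strictly below $\bar\rho^\beta\cdot\bar\rho^{\gamma-\alpha}$. This is precisely where the structural restrictions $\alpha>1$, $\alpha>(\gamma+1)/2$, $\alpha\ge\gamma-1$, and the choice $\beta=\max\{3-\gamma,0\}$ become essential: the worst arithmetic on the exponents forces exactly this $\beta$, and any laxer choice would leave a residual positive power of $\bar\rho$ that cannot be absorbed by the weak dissipation $\bar\rho^{\gamma-\alpha}$ (which is $\le\bar\rho$ but possibly very small for $\alpha$ close to $\gamma-1$).
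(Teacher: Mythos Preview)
Your treatment of $\mathcal{E}_{\rho,2}$ and $\mathcal{E}_{\rho,3}$ is essentially the paper's argument: differentiate the mass equation, multiply by $|\nabla\rho|^{p-2}\nabla\rho$, use the flux decomposition to extract the dissipation $\bar\rho^{\gamma-\alpha}\|\nabla\rho\|_{L^p}^p$, and close via the elliptic bounds \eqref{e2}--\eqref{e3}, \eqref{W2p} together with the already-established time-integrated control of $\|\rho u_t\|_{L^q}^2$ and $\|\nabla u\|_{L^2}^2$. That part is fine.

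The gap is in your argument for $\mathcal{E}_{\rho,1}$. You propose the Lagrangian formula $\rho(t,X(t))=\rho_0\exp\bigl(-\int_0^t\dv u\,ds\bigr)$ and then claim it suffices to show $\int_0^T\|\dv u\|_{L^\infty}\,dt=O(\bar\rho^{-\kappa})$. But this integral is \emph{not} uniformly small in $T$: all the a~priori control you have is $L^2$-in-time (e.g.\ $\int_0^T\|\nabla u\|_{L^2}^2\,dt\lesssim\bar\rho^{1-\alpha}$ from \eqref{basic-est-0}, $\int_0^T\|\rho u_t\|_{L^q}^2\,dt\lesssim\bar\rho^{1+\alpha}$ from \eqref{ep1}), and Cauchy--Schwarz converts this to an $L^1$-in-time bound only at the cost of a factor $T^{1/2}$. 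Since the lemma must hold for arbitrary $T$ to run the continuation argument, your estimate does not close. A trajectory approach \emph{can} be made to work, but only by separating $\dv u=(2\mu_1+\mu_2)^{-1}\bigl(F-(\mathcal{P}(\rho)-\mathcal{P}(\bar\rho))\bigr)$ and treating the pressure piece as a damping term in the resulting ODE for $\rho-\bar\rho$ along the flow, then controlling $\int_0^T\|F\|_{L^\infty}^2\,dt$ rather than $\int_0^T\|\dv u\|_{L^\infty}\,dt$; you have not done this.

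The paper avoids this issue entirely by a pointwise-in-time argument: once $\mathcal{E}_{\rho,2}$ is closed, Gagliardo--Nirenberg gives
\[
\|\rho-\bar\rho\|_{L^\infty}\le C\|\rho-\bar\rho\|_{L^2}^{\frac{2(q-3)}{5q-6}}\|\nabla\rho\|_{L^q}^{\frac{3q}{5q-6}},
\]
and the basic energy estimate \eqref{basic-est-0} supplies $\|\rho-\bar\rho\|_{L^2}\lesssim\bar\rho^{(3-\gamma)/2}$. Since $\beta=\max\{3-\gamma,0\}<2$, the resulting power of $\bar\rho$ is strictly less than $1$, and taking $\bar\rho\ge\Lambda_4$ forces $\|\rho-\bar\rho\|_{L^\infty}\le\bar\rho/4$. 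No time integration of $\|\dv u\|_{L^\infty}$ is required.
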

\begin{proof}
	It follows from \eqref{ins}$_1$ and \eqref{flux} that  
    \begin{equation}
        \nabla \rho_t + u \cdot \nabla^2 \rho + \nabla u \cdot \nabla \rho + \nabla \rho \dv u+ \frac{1}{2\mu+\lambda} \rho \left(\nabla F + \nabla \mathcal{P} \right)=0.
    \end{equation}
    Multiplying the above equation by $|\nabla \rho|^{q-2}\nabla \rho$ and then integrating by parts, we have 
    \begin{equation}
        \begin{aligned}
            &\frac{1}{p}\frac{d}{dt} \|\nabla \rho\|_{L^q}^q + \frac{a \gamma }{2\mu+\lambda}\int  \rho^{\gamma-\alpha}|\nabla \rho|^qdx \\
            =& \frac{1-q}{q}\int \dv u |\nabla \rho |^qdx - \int \nabla \rho \cdot \nabla u \cdot \nabla \rho |\nabla \rho|^{q-2} dx - \frac{ 1 }{2\mu+\lambda} \int \rho \nabla F \cdot \nabla \rho |\nabla \rho|^{p-2} dx \\
            \le & C \|\nabla u\|_{L^\infty} \|\nabla \rho \|_{L^q}^q + C \bar \rho \|\nabla F\|_{L^q} \|\nabla \rho \|_{L^q}^{q-1}.
        \end{aligned}
    \end{equation}
    Then, we have
    \begin{equation}
        \frac{d}{dt} \|\nabla \rho\|_{L^q}^2 + \bar \rho^{\gamma-\alpha}   \|\nabla \rho\|_{L^q}^2  \le C \|\nabla u\|_{L^\infty} \|\nabla \rho \|_{L^q}^2 + C \bar \rho \|\nabla F\|_{L^q} \|\nabla \rho \|_{L^q},
    \end{equation}
    which together with \eqref{e3} and \eqref{W2p} gives that 
    \begin{equation}
    \begin{aligned}
        &\frac{d}{dt} \|\nabla \rho\|_{L^q}^2 + \bar \rho^{\gamma-\alpha}   \|\nabla \rho\|_{L^q}^2\\
        \le& C \|\nabla u\|_{L^\infty} \|\nabla \rho \|_{L^q}^2 + C \bar\rho^{-\alpha+1} \|\rho u_t\|_{L^q} \|\nabla \rho \|_{L^q} + C \bar\rho^{-\alpha+2}  \| u\|_{L^q} \| \nabla u\|_{L^\infty} \|\nabla \rho \|_{L^q} \\
        &+ C \bar \rho^{1-\alpha} \|\nabla^2 d\|_{L^2}^{\frac{3}{q}}\|\nabla^3 d\|_{L^2}^{2-\frac{3}{q}} \|\nabla \rho\|_{L^q}\\
        \le& C \bar\rho \|\nabla u\|_{L^\infty} \|\nabla \rho \|_{L^q} + C \bar\rho^{-\alpha+1} \|\rho u_t\|_{L^q} \|\nabla \rho \|_{L^q}  + C \bar \rho^{1-\alpha} \|\nabla^2 d\|_{L^2}^{\frac{3}{q}}\|\nabla^3 d\|_{L^2}^{2-\frac{3}{q}} \|\nabla \rho\|_{L^q}\\
        \le& C \bar\rho \left(\bar\rho^{-\alpha}\|\rho u_t\|_{L^q}  + \bar \rho^{\gamma-\alpha-1}\|\nabla \rho \|_{L^q} + \bar\rho^{-\alpha} \|\nabla^2 d\|_{L^2}^{\frac{3}{q}}\|\nabla^3 d\|_{L^2}^{2-\frac{3}{q}}+\|\nabla u\|_{L^2} \right) \|\nabla \rho \|_{L^q} \\
        &+ C \bar\rho^{-\alpha+1} \|\rho u_t\|_{L^q} \|\nabla \rho \|_{L^q}  + C \bar \rho^{1-\alpha} \|\nabla^2 d\|_{L^2}^{\frac{3}{q}}\|\nabla^3 d\|_{L^2}^{2-\frac{3}{q}} \|\nabla \rho\|_{L^q}\\
        \le& C \bar\rho \left( \bar \rho^{\gamma-\alpha-1}\|\nabla \rho \|_{L^q} +\|\nabla u\|_{L^2} \right) \|\nabla \rho \|_{L^q} \\
        &+ C \bar\rho^{-\alpha+1} \|\rho u_t\|_{L^q} \|\nabla \rho \|_{L^q}  + C \bar \rho^{1-\alpha} \|\nabla^2 d\|_{L^2}^{\frac{3}{q}}\|\nabla^3 d\|_{L^2}^{2-\frac{3}{q}} \|\nabla \rho\|_{L^q}\\
        \le & \frac12 \bar \rho^{\gamma-\alpha}   \|\nabla \rho\|_{L^q}^2  + C \bar\rho^{2-\alpha-\gamma} \|\rho u_t\|_{L^q}^2 + C \bar \rho^{2+\alpha-\gamma} \|\nabla  u\|_{L^2}^2 \\
        &+ C \bar \rho^{1-\alpha} \|\nabla^2 d\|_{L^2}^{\frac{3}{q}}\|\nabla^3 d\|_{L^2}^{2-\frac{3}{q}} \|\nabla \rho\|_{L^q}.
    \end{aligned}
    \end{equation}

    Next, it follows from \eqref{a1} and \eqref{e-p} that 
    \begin{equation}\label{ep1}
        \begin{aligned}
            \int_{0}^{T}  \|\rho u_t\|_{L^q}^2 dt \le & \int_{0}^{T}  \bar\rho^{\frac{5q-6}{2q}}\|\sqrt{\rho}u_t\|_{L^2}^{\frac{6-q}{q}} \|\nabla u_t\|_{L^2}^{\frac{3(q-2)}{q}} dt\\
            \le & C \bar\rho^{\frac{5q-6}{2q}} \left(\int_{0}^{T} \|\sqrt{\rho}u_t\|_{L^2}^2 dt\right)^{\frac{6-q}{2q}} \left( \int_{0}^{T}\|\nabla u_t\|_{L^2}^2 dt \right)^{\frac{3(q-2)}{2q}}\\
            \le & C \bar\rho^{\frac{5q-6}{2q}+ \alpha \frac{6-q}{2q} + (\alpha-1)  \frac{3(q-2)}{2q}} = C \bar\rho^{1+\alpha},
        \end{aligned}
    \end{equation}
    and that 
    \begin{equation}\label{ddd}
        \begin{aligned}
            \int_{0}^{T} \|\nabla^2 d\|_{L^2}^{\frac{3}{q}}\|\nabla^3 d\|_{L^2}^{2-\frac{3}{q}} dt \le C.
        \end{aligned}
    \end{equation}

    Finally, applying the Gronwall inequality to \eqref{rho1} and using \eqref{ep1}, \eqref{ddd}, we have 
    \begin{equation}
    \begin{aligned}
        &\sup_{0\le t\le T} \|\nabla \rho\|_{L^q}^2 + \frac{1}{2} \bar \rho^{\gamma-\alpha}   \int_{0}^{T} \|\nabla \rho\|_{L^q}^2 dt \\
        \le & C\left(\|\nabla \rho_0\|_{L^q}^2 + C \bar\rho^{2-\alpha-\gamma} \int_{0}^{T}\|\rho u_t\|_{L^q}^2 dt + C \bar\rho^{2+\alpha-\gamma}  \int_{0}^{T} \| \nabla u\|_{L^2}^2 dt \right)\\
        & + C \bar \rho^{1-\alpha} \int_0^T \|\nabla^2 d\|_{L^2}^{\frac{3}{q}}\|\nabla^3 d\|_{L^2}^{2-\frac{3}{q}} \|\nabla \rho\|_{L^q} dt\\
        \le & C \left(\|\nabla \rho_0\|_{L^q}^2 + \bar\rho^{3-\gamma} +  \bar\rho^{1-\alpha + \frac{\beta}{2}}  \right)\\
        \le & C_4 \left(\|\nabla \rho_0\|_{L^q}^2 +  \bar\rho^{3-\gamma} \right)\\
        \le & N_1 \bar \rho^{\beta},
    \end{aligned}
    \end{equation}
    where 
    \begin{equation}\label{m3}
        N_1 \triangleq C_4 \left(\|\nabla \rho_0\|_{L^q}^2 + 1\right).
    \end{equation}

        Similarly, we have
    \begin{equation}
    \begin{aligned}
        &\frac{d}{dt} \|\nabla \rho\|_{L^2}^2 + \bar \rho^{\gamma-\alpha}   \|\nabla \rho\|_{L^2}^2\\
        \le & C \bar\rho^{2-\alpha-\gamma} \|\rho u_t\|_{L^2}^2 + C \bar \rho^{2+\alpha-\gamma} \|\nabla  u\|_{L^2}^2 + C \bar \rho^{1-\alpha} \|\nabla^2 d\|_{L^2}^{\frac{3}{2}}\|\nabla^3 d\|_{L^2}^{\frac{1}{2}} \|\nabla \rho\|_{L^2},
    \end{aligned}
    \end{equation}
    which together with \eqref{ddd} gives
    \begin{equation}
    \begin{aligned}
        &\sup_{0\le t\le T} \|\nabla \rho\|_{L^2}^2 +  \bar \rho^{\gamma-\alpha}   \int_{0}^{T} \|\nabla \rho\|_{L^2}^2 dt \\
        \le & C\left(\|\nabla \rho_0\|_{L^2}^2 + C \bar\rho^{2-\alpha-\gamma} \int_{0}^{T}\|\rho u_t\|_{L^2}^2 dt + C \bar\rho^{2+\alpha-\gamma}  \int_{0}^{T} \| \nabla u\|_{L^2}^2 dt \right)\\
        & + C \bar \rho^{1-\alpha} \int_0^T \|\nabla^2 d\|_{L^2}^{\frac{3}{2}}\|\nabla^3 d\|_{L^2}^{\frac{1}{2}} \|\nabla \rho\|_{L^2} dt\\
        \le & C \left(\|\nabla \rho_0\|_{L^q}^2 + \bar\rho^{3-\gamma} +  \bar\rho^{1-\alpha + \frac{\beta}{2}}  \right)\\
        \le & C_5 \left(\|\nabla \rho_0\|_{L^2}^2 +  \bar\rho^{3-\gamma} \right)\\
        \le & N_2 \bar \rho^{\beta},
    \end{aligned}
    \end{equation}
    where 
    \begin{equation}\label{m33}
        N_2 \triangleq C_5 \left(\|\nabla \rho_0\|_{L^2}^2 + 1\right).
    \end{equation}
    
    Finally,   
    \begin{equation}
    \begin{aligned}
        \mathcal{E}_{\rho,1}(T)=\sup_{t\in[0,T] }\|\rho-\bar\rho \|_{L^\infty}\le &C \sup_{t\in[0,T] }\|\rho-\bar\rho \|_{L^2}^{\frac{2(q-3) }{5q-6}} \|\nabla \rho \|_{L^q}^{\frac{3q }{5q-6}}\\
        \le& C \bar \rho^{\frac{2(q-3) }{5q-6} \frac{3-\gamma}{2}+ \frac{\beta}{2} \frac{3q }{5q-6} } \\
        \le& C_6 \bar \rho^{\frac{2(q-3) }{5q-6} + \frac{\beta}{2} \frac{3q }{5q-6} }\le \frac{\bar \rho}{4}, 
    \end{aligned}
    \end{equation}
    provided that 
    \begin{equation}\label{lambda}
        \bar \rho \ge \Lambda_4 \triangleq \max \left\{\Lambda_3, (4C_6)^{\frac{2(5q-6)}{3q(2-\beta)}} \right\}.
    \end{equation}
\end{proof}
\textbf{Proof of Prosition \ref{pr}}
    Proposition \ref{pr} is a direct consequence of Lemmas \ref{3d}, \ref{L_2} and \ref{L_1} after choosing
    \begin{equation}\label{delta}
        \delta < \min\left\{\frac{1}{2\sqrt{2 c_1}}, \frac{1}{9 c_2}, \frac{1}{2c_3}, \frac{1}{4c_4}\right\},
    \end{equation}
    with constants $c_1,c_2,c_3,c_4$ depending only on the Sobolev constants and elliptic constants,
    and 
    $$\Lambda_0\triangleq\Lambda_4.$$
    
\section{Proof of Theorem \ref{global}}
According to Theorem \ref{local}, there exists a $\Tilde{T}>0$ such that the compressible simplified Ericksen-Leslie system \eqref{ins}-\eqref{bc} has a unique local strong solution $(\rho, u, d)$ on $[0, \Tilde{T}]$. We use the a priori estimates, Proposition \ref{pr} to extend the local strong solution to all time.

Due to 
\begin{equation}
    \|\nabla \rho_0\|_{L^q}=\mathcal{E}_\rho(0)<3\mathcal{E}_\rho(0), \ 
    \|\nabla u_0\|_{L^2}^2=M<3M,\
    \|\nabla d_0\|_{L^3}=\mathcal{E}_d(0)<2\delta,
\end{equation}
and the local regularity results \eqref{l-r}, there exists a $T_1\in(0, \Tilde{T})$ such that
\begin{equation}
    \sup_{0\le t\le T_1} \|\nabla \rho\|_{L^q}\leq 3\mathcal{E}_\rho(0),\ 
    \sup_{0\le t\le T_1} \|\nabla u_0\|_{L^2}^2 \leq 3M,\
    \sup_{0\le t\le T_1} \|\nabla d_0\|_{L^3} \leq 2\delta.
\end{equation}
Set
\begin{equation}
    T^\ast=\sup\{T| (\rho, u, d)\ \mathrm{is}\ \mathrm{a}\ \mathrm{strong}\ \mathrm{solution}\ \mathrm{to}\ \eqref{ins}-\eqref{bc}\ \mathrm{on}\ [0,T]\},
\end{equation}
\begin{equation}
T_1^\ast=\sup\left\{T\bigg| 
\begin{array}{l}
(\rho, u, d)\ \mathrm{is}\ \mathrm{a}\ \mathrm{strong}\ \mathrm{solution}\ \mathrm{to}\ \eqref{ins}-\eqref{bc}\ \mathrm{on}\ [0,T],\\
\sup_{0\le t\le T_1} \|\nabla \rho\|_{L^q}\leq 3\mathcal{E}_\rho(0),\ 
\sup_{0\le t\le T_1} \|\nabla u_0\|_{L^2}^2 \leq 3M,\\
\sup_{0\le t\le T_1} \|\nabla d_0\|_{L^3} \leq 2\delta.
\end{array} \right\}.
\end{equation}
Then $T^\ast_1\geq T_1>0$. Recalling Proposition \ref{pr}, it’s easy to verify
\begin{equation}
    T^\ast=T^\ast_1.
\end{equation}
provided that $\bar\rho>\Lambda_0$ and $\|\nabla d_0\|_{L^3}\leq  \varepsilon_0$ as assumed. 

We claim that $T^\ast=\infty$. Otherwise, assume that
$T^\ast<\infty$. By virtue of Proposition \ref{pr}, for every $t\in[0, T^\ast)$, it holds that
\begin{equation}
     \|\nabla \rho\|_{L^q}\leq 2\mathcal{E}_\rho(0),\ 
     \|\nabla u_0\|_{L^2}^2 \leq 2M,\
     \|\nabla d_0\|_{L^3} \leq \delta,
\end{equation}
therefore we can extend the solution to $T^{\ast\ast}>T^\ast$ due to Lemma \ref{local}, which contradicts with the defination of $T^\ast$. Hence we finish the proof of Theorem \ref{global}.

\section*{Conflict-of-interest statement}
All authors declare that they have no conflicts of interest.

\section*{Data Availability}
No data were used for the research described in the article.

\section*{Acknowledgments}
Y. Mei is supported by the National Natural Science Foundation of China No. 12101496 and 12371227.	
R. Zhang is supported by the National Natural Science Foundation of China No. 12401279. 

\normalem
\bibliographystyle{siam}
\bibliography{ref}

\end{document}